\DeclarePairedDelimiter\floor{\lfloor}{\rfloor}
\newcommand{\forceindent}{\leavevmode{\parindent=1.5em\indent}}
\newcommand{\R}{\mathbb R}
\newcommand{\N}{\mathbb N}
\newcommand{\minmath}{\mathop{}\!\mathrm{min}}
\renewcommand\section{\leftskip 0pt\@startsection {section}{1}{\z@}%
	{-3.5ex \@plus -1ex \@minus -.2ex}%
	{2.3ex \@plus.2ex}%
	{\normalfont\Large\bfseries}}
\renewcommand\subsection{\leftskip 0pt\@startsection{subsection}{2}{\z@}%
	{-3.25ex\@plus -1ex \@minus -.2ex}%
	{1.5ex \@plus.2ex}%
	{\normalfont\large\bfseries}}
\renewcommand\subsubsection{\leftskip 0pt\@startsection{subsubsection}{3}{\z@}%
	{-3.25ex\@plus -1ex \@minus -.2ex}%
	{1.5ex \@plus .2ex}%
	{\normalfont\large\bfseries}}
\newtheorem{definition}{Definition}
\newtheorem{Theorem}{Theorem}[section]
\theoremstyle{remark}
\newtheorem{remark}[Theorem]{Remark}
\begin{document}
	
\title{ \centering{Approximation of Functions on Manifolds in High Dimension from Noisy Scattered Data}}
\author{Shira Faigenbaum-Golovin${^{1, *}}$~~David Levin${^1}$ \\
	\small{${^1}$ School of Mathematical Sciences, Tel Aviv University, Israel}
	\\
	\small{${^*}$ Corresponding author, E-mail address: alecsan1@post.tau.ac.il} %, \email{alecsan1@post.tau.ac.il}
}

%\date{Received: date / Accepted: date}
% The correct dates will be entered by the editor

\maketitle
\begin{abstract}
In this paper, we consider the fundamental problem of approximation of functions on a low-dimensional manifold embedded in a high-dimensional space, with noise affecting both in the data and values of the functions. Due to the curse of dimensionality, as well as to the presence of noise, the classical  approximation methods applicable in low dimensions are less effective in the high-dimensional case.  We propose a new approximation method that leverages the advantages of the Manifold Locally Optimal Projection (MLOP) method  \cite{faigenbaumgolovin2020manifold} and the strengths of the method of Radial Basis Functions (RBF) \cite{dyn1983iterative}. The method is parametrization free,  requires no  knowledge regarding the manifold's intrinsic dimension, can handle noise and outliers in both the function values and in the location of the data, and is applied directly in the high dimensions. We show that the complexity of the method is linear in the dimension of the manifold  and squared-logarithmic in the dimension of  the codomain of the function. Subsequently, we demonstrate the effectiveness of our approach by considering different manifold topologies and show the robustness of the method to various noise levels.

% \PACS{PACS code1 \and PACS code2 \and more}
% \subclass{MSC code1 \and MSC code2 \and more}
\end{abstract}

\noindent\textbf{keywords:} Manifold learning, Approximation of functions, High dimensions, Dimensional reduction, Noisy data 

\noindent\textbf{MSC classification:} 65D99 \\
(Numerical analysis - Numerical approximation and computational geometry)

\section{Introduction}
In this paper, we consider the following formulation of the problem of approximation of functions in a high-dimensional space. Let $f: \mathbb{R}^n \to \mathbb{R}^s$ be a function, and let $\{f(x_i)\}_{i=1}^K$ be its values on a given sample set of points $\{x_i\}_{i=1}^K \subset \mathbb{R}^n$ with noise present both in the domain of the function and in its codomain. The goal of the approximation is to estimate the values of the function at a new set of points. While in low dimensions numerous methods were suggested to solve this problem (e.g., splines, or Moving Least-Squares \cite{levin1998approximation}), in high dimensions this is a challenging task due to the presence of noise and the curse of dimensionality. For instance, with respect to the latter challenge, if one merely assumes that the function is smooth, then approximation rates deteriorate severely with the growth of the dimension, the reason being that the amount of sampled data should grow exponentially with respect to the dimension if one wishes to maintain the same order of approximation. 

\forceindent We categorize high-dimensional approximation methods according to whether the  domain of the function to be approximated is a manifold or not. If no assumptions are made on the data domain, several methods were suggested. For example, solutions which treat non-smooth multivariate functions, \cite{amir2018high}, are based on sparse occupancy trees \cite{binev2011fast}, Radial Basis Functions \cite{dyn1983iterative} (which we will discuss in detail below), or address the problem in the case where the values of the function lie on a manifold \cite{grohs2013projection}. 

\forceindent In many situations, the high-dimensional data reside on a low-dimensional manifold, and this information can be exploited to improve the approximation via one of  the following two approaches: approximating in low dimension after dimension reduction, or alternatively approximating in high dimension. At times, reducing the dimension (e.g., in PCA \cite{pearson1901liii}, Multidimensional Scaling \cite{cox2000multidimensional}, Linear Discriminant Analysis \cite{fisher1936use}, Locality Preserving Projections \cite{he2004locality},  Locally Linear Embedding \cite{roweis2000nonlinear}, ISOMAP \cite{tenenbaum2000global}, Diffusion Maps \cite{coifman2005geometric}, and Neural Networks in their general form, \cite{lin2008riemannian}) can lead to a better approximation (in terms of handling the challenge of the dimensionality, as well as the noise in the data). However, it may be non-efficient if the data volume is very large, and in addition, may result in information loss (due to some assumptions that need to be made on the data, e.g., regarding the data geometry, or the intrinsic dimension).

\forceindent On the other hand, the assumption that the data reside on a manifold can be utilized  to improve the approximation in high dimensions. Approximation of functions on manifolds is studied using local polynomials \cite{bickel2007local}, wavelets \cite{coifman2006diffusion}, local linear regression \cite{bickel2007local}, or neural networks \cite{andras2017high, shaham2018provable, chen2019efficient}. For smooth functions on $[0, 1]^N$ which depend on a much smaller number $l$  of variables, a solution was suggested in \cite{devore2011approximation}. In addition, a recent paper, \cite{sober2017approximation},  proposed a solution based on Moving Least-Squares (MLS) that was designed to deal with noisy data with good rates of approximation.

\forceindent In this paper, we propose a method of  approximation of functions that leverages the advantages of the Manifold Locally Optimal Projection (MLOP) algorithm \cite{faigenbaumgolovin2020manifold} to complement the strengths of the method of  Radial Basis Functions (RBF) \cite{dyn1983iterative, buhmann2003radial}. We introduce this duet for approximation in high dimensions under noisy conditions (both in the domain and in the codomain of the function). In what follows we will provide a short introduction to the  RBF  method as well as to the Locally Weighted Average Approximation method (which will be used to contrast the RBF numerical results). In the next section, we will explain the proposed methodology, and demonstrate the improvement in approximation via several numerical examples.

\section{Preliminaries}
\subsection{Preliminaries --- the MLOP framework}
\label{MLOP_pre}	
%Extending The Manifold-MLS to Function Approximation
The Locally Optimal Projection (LOP) method  was introduced in \cite{lipman2007parameterization} to approximate two-dimensional surfaces in $\mathbb{R}^3$ from point-set data. The procedure does not require the estimation of local normals  and planes,  or parametric representations. Its main advantage  is that it performs well in the case of noisy samples. In \cite{faigenbaumgolovin2020manifold} the LOP mechanism was generalized to devise the so-called \textit{Manifold Locally Optimal Projection} (MLOP) method. Here, we give a concise overview of the MLOP method and its key properties.

\forceindent First, we introduce the $h$-$\rho$ condition, defined for scattered-data approximation of functions (which is an adaptation of the condition in \cite{levin1998approximation} for low-dimensional data), to handle finite discrete data on manifolds.

\begin{definition}\label{def:def4}
	\textbf{$h$-$\rho$ sets of fill distance $h$  and density $\leq \rho$} with respect to a manifold $\mathcal{M}$. Let $\mathcal{M}$ be a manifold in $\mathbb{R}^n$ and consider sets of data points sampled from $\mathcal{M}$. We say that such a set $P=\{{P_j }\}_{i=1}^J$ is an 
	$h$-$\rho$ set if:\\
	1.   $h$ is the fill distance, i.e., $h=\text{median}_{p_j \in P} \minmath_{p_j \in P \backslash\{p_i\}} \|p_i-p_j\|$. \\
	2.   $\#\{P \cap \bar{B}(y,kh)\}\leq \rho k^n, \quad k \geq 1, \quad y\in \mathbb{R}^n$. \\
	Here $\#Y$ denotes the number of elements in a set $Y$ and $\bar{B}(x,r)$ denotes the closed ball of radius $r$ centered at $x$. 
\end{definition}
\vspace{-6mm}
\forceindent Note that the last condition regarding the point  separation $\delta$ defined in \cite{levin1998approximation}, which states that there exists $\delta >0$ such that  $\|p_i-p_j \| \geq \delta, \quad 1\leq i \leq j \leq J$, is redundant in the case of finite data. We also note   that the vanilla definition of the  fill distance uses the supremum $\sup$ in its expression;  here we use the median in order to deal with the presence of outliers.

\forceindent The setting of the high-dimensional reconstruction problem is the following: Let $\mathcal{M}$ be a manifold  in $\mathbb{R}^n$  of unknown intrinsic dimension $d \ll n$. There  is given a noisy point-cloud $P=\{p_j\}_{j=1}^J \subset \mathbb{R}^n$ situated near  the manifold $\mathcal{M}$   such that   $P$ is a $h$-$\rho$ set. We wish to find a new point-set $Q=\{q_i\}_{i=1}^I \subset \mathbb{R}^n$  which will serve as a noise-free approximation of $\mathcal{M}$. We seek a solution in the form of a new, quasi-uniformly distributed point-set $Q$ that will replace the given data $P$  and provide a noise-free approximation of $\mathcal{M}$. This is achieved by leveraging the well-studied weighted $L_1$-median \cite{vardi2000multivariate} used in the LOP algorithm and requiring a quasi-uniform distribution of points $q_i \in Q$. 
These ideas are encoded by the cost function
\begin{equation} 
\label{eq:1}	 G(Q) = E_1(P,Q)+\Lambda E_2(Q) = \sum\limits_{q_i \in Q} \sum\limits_{p_j \in P} \|q_i-p_j\|_{H_{\epsilon}} w_{i,j} +
\sum\limits_{q_i \in Q} \lambda_i \sum\limits_{q_{i'} \in Q \backslash \{q_i\}} \eta(\|q_i-q_i'\|) \widehat w_{i,i'}\,, 
\end{equation}
where the weights $w_{i,j}$ are rapidly decreasing smooth functions. The MLOP implementation uses  $w_{i,j} = \exp\big\{\!-\|q_i-p_j\|^2/{h_1^2}\big\}$
%$e^{-\frac{\|q_i-p_j\|^2}{h_1^2}}$
and $\widehat w_{i,i'} = \exp\big\{\!-\|q_i-q_i'\|^2/{h_2^2}\big\}$.
The  $L_1$-norm used in \cite{lipman2007parameterization}  is replaced by the ``norm" $\| \cdot \|_{H_{\epsilon}}$ introduced in \cite{levin2015between} as $\|v\|_{H_\epsilon}=\sqrt{v^2+\epsilon}$, where $\epsilon >0$ is a fixed parameter (in our case we take $\epsilon=0.1$). As shown in \cite{levin2015between}, using $\| \cdot \|_{H_{\epsilon}}$ instead of $\| \cdot \|_1$ has the advantage  that one works  with a smooth cost function  and outliers can be removed. In addition, $h_1$ and $h_2$ are the support size parameters of $w_{i,j}$ and $\widehat w_{i,i'}$ which guarantee a sufficient amount of $P$ or $Q$ points for the reconstruction (for more details, see the subsection  ``Optimal Neighborhood Selection"  in \cite{faigenbaumgolovin2020manifold}). Also, $\eta(r)$ is a decreasing function such that $\eta(0)= \infty $; in our case we take $\eta(r) = \frac {1} {3r^3}$. Finally, $\{\lambda_i\}_{i=1}^I$ are constant balancing parameters.

In order to solve the problem with the cost function \eqref{eq:1}, we look for a point-set $Q$ that minimizes $G(Q)$. 
The solution $Q$ is found via the gradient descent iterations
\begin{equation}
q_{i'}^{(k+1)}=q_{i'}^{(k)}-\gamma_k \nabla G(q_{i'}^{(k)}),\qquad i'=1,\dots,I \,, 
\end{equation}
where the initial guess $\{q_i^{(0)}\}_{i-1}^I=Q^{(0)}$ consists of points   sampled from $P$. \\
The gradient of $G$ is  
\begin{equation}  \label{eq:GradG} \nabla G(q_{i'}^{(k)}) = \sum\limits_{j=1}^J {\big(q_{i'}^{(k)}-p_j\big) \alpha_j^{i'}}- \lambda_{i'}\sum\limits_{\substack {i=1 \\ i\neq i'}}^I{ \big(q_{i'}^{(k)} - q_{i}^{(k)} \big) \beta_i^{i'}}\,, \end{equation}
with the coefficients $\alpha_j^{i'}$ and $\beta_j^{i'}$  given by the formulas
\begin{equation}
\label{eq:alpha}
\alpha_j^{i'}  = \frac{w_{i,j}}{ \|q_i-p_j\|_{H_{\epsilon}}} \left(1-\frac{2}{h_1^2}\|q_i-p_j\|_{H_{\epsilon}}^2\right)
\end{equation}
and 
\begin{equation}
\label{eq:beta}
\beta_i^{i'} = \frac{\widehat w_{i,i'}}{ \|q_i-q_{i'}\|} \left( \left| {\frac{\partial \eta \left( \|q_i-q_{i'}\| \right) } {\partial r}}\right|  +
\frac{2\eta \left( \|q_i-q_{i'}\| \right) } {h_2^2}   \|q_i-q_{i'}\|
\right),	 
\end{equation}
for 	$i=1,...,I$, $i\ne i'$.
In order to balance the two terms in $\nabla G(q_{i'}^{(k)})$, the factors $\lambda_{i'}$ are initialized in the first iteration as
\begin{equation}  \label{eq:2}
\lambda_{i'} = -\,\frac{\bigg\|\sum\limits_{j=1}^J {\big(q_{i'}^{(k)}-p_j\big) \alpha_j^{i'}} \bigg\|}{\bigg\|\sum\limits_{i=1}^I{\big(q_{i'}^{(k)} - q_{i}^{(k)} \big) \beta_i^{i'}} \bigg\|}\,.
\end{equation}
Balancing the contribution of the two terms is important in order to maintain equal influence of the attraction and repulsion forces in $G(Q)$.
The step size in the direction of the gradient $\gamma_k$ is calculated as indicated in \cite{barzilai1988two}:
\begin{eqnarray} \label{gamma_k} \gamma_k = \frac{\langle \bigtriangleup q_{i'}^{(k)},  \bigtriangleup G_{i'}^{(k)} \rangle} {\langle \bigtriangleup G_{i'}^{(k)},  \bigtriangleup G_{i'}^{(k)} \rangle}\,,  \end{eqnarray}
where  $\bigtriangleup q_{i'}^{(k)}  = q_{i'}^{(k)}  - q_{i'}^{(k-1)}$ and $\bigtriangleup G_{i'}^{(k)}  = \nabla G_{i'}^{(k)}  - \nabla G_{i'}^{(k-1)}$.\\
\vspace{-6mm}
\begin{remark}\label{def:red_dim}
	The reasoning in terms of Euclidean distances, which is the cornerstone of the MLOP method, works well in low dimensions, e.g., for the reconstruction of surfaces in 3D, but breaks down in high dimensions once  noise  is present. To deal with this issue, a dimension reduction is performed via random linear sketching \cite{woodruff2014sketching}. It should be emphasized that the dimension reduction procedure is utilized solely for the calculation of norms, and the manifold reconstruction is performed in the high-dimensional space. Thus, given a point $x \in \mathbb{R}^n$, we project it to a lower dimension $m \ll n$ using a random matrix, $S$, with certain properties. Subsequently, the norm of  $\|S^t x\|$ will approximate $\|x\|$. The construction of $S$ is carried out in the following steps:
	\begin{enumerate}[noitemsep]
		\item Sample $G\in \R^{J\times m}$ with $G\sim N(0, 1)$.
		\item Compute $B \in \R^{n \times m}$ as $B:=P^{\rm t}G$.
		\item Calculate the QR decomposition of $B$ as $B = SR$, and use $S$ as the dimension reduction matrix.
	\end{enumerate}
\end{remark}

\textbf{Preliminaries --- Optimal Neighborhood Selection}

\forceindent The support sizes $h_1$  and $h_2$ of  the functions $w_{i,j} = \exp\big\{\!-\|q_i-p_j\|^2/{h_1^2}\big\}$
%$e^{-\frac{\|q_i-p_j\|^2}{h_1^2}}$
and $\widehat w_{i,i'} = \exp\big\{\!-\|q_i-q_i'\|^2/{h_2^2}\big\}$, respectively, are closely related to the fill distance of the $P$-points and the $Q$- points. 
Due to the importance of optimal selection of these parameters, we quote here several definitions and results from \cite{faigenbaumgolovin2020manifold}. Unlike the standard definition of the notion of fill distance in scattered-data function approximation \cite{levin1998approximation}, we introduce
\begin{definition}\label{def:def1}
	The fill distance of the set $P$ is 	
	\begin{eqnarray} \label{h_0} h_0=\text{median}_{p_i\in P} \minmath_{p_j\in P \backslash \{p_i\}} \|p_i-p_j \| \,. \end{eqnarray}
	Note  that the vanilla definition of fill distance uses the supremum in the definition (instead of the \textup{median}). However, as mentioned above, in our case we replace  the supremum with  the median so as to deal with the presence of outliers.
\end{definition}

\begin{definition}\label{def:def2}
	Given two point-clouds, $P=\{p_j\}_{j=1}^J \subset \mathbb{R}^n$ and $Q=\{q_i\}_{i=1}^I \subset \mathbb{R}^n$, situated near a manifold $\mathcal{M}$ in $\mathbb{R}^n$, such that their sizes obey the constraint $I\leq J$, denote $\nu= \floor*{\frac{J} {I}}$. Then we say that the radius that guarantees approximately $\nu$ points from $P$ in the support of each point $q_i$ is $\widehat{h}_0=c_1 h_0$, with $c_1$ given by
	\begin{eqnarray} \label{c_1}  \widehat{h}_0=c_1 h_0,~{\rm with}~ ~c_1=\text{argmin} \{c: \#(\bar{B}_{ch_0}(q_i) \cap P)\geq \nu,\,  \forall q_i\in Q\}\,. \end{eqnarray}
\end{definition}

\begin{remark}\label{def:def3}
	Let $\sigma$ be the variance of the Gaussian $w(r) = \exp\{- {r^2}/{\sigma^2}\}$. For the normal distribution, four standard deviations away from the mean account for $99.99\%$ of the set.
	In our case, by the definition of $w_{i,k}$, since $h$ is the square root of the variance, $4\sigma= 4\frac {h}{\sqrt{2}}= 2 \sqrt{2}h_1$ covers $99.99\%$ of the support size of $w_{i,k}$.
\end{remark}
%https://en.wikipedia.org/wiki/68%E2%80%9395%E2%80%9399.7_rule

The following theorem, proved in \cite{faigenbaumgolovin2020manifold}, indicates how the parameters $h_1$ and $h_2$  should be selected.

\begin{Theorem}\label{lma0}
	Let $\mathcal{M}$ be a $d$-dimensional manifold in $\mathbb{R}^n$. Suppose given two point-clouds, $P=\{p_j\}_{j=1}^J \subset \mathbb{R}^n$ and $Q=\{q_i\}_{i=1}^I \subset \mathbb{R}^n$, situated near   $\mathcal{M}$, such that their sizes obey the constraint $I\leq J$, and let $\nu= \floor*{\frac{J} {I}}$. Let $w_{i,j}$ be the locally supported weight function given by $w_{i,j} = \exp\{- {\|q_i-p_j\|^2}/{h^2}\}$. Then a neighborhood size of $h = 2 \sqrt{2}\widehat{h}_0$ guarantees $ 2^{1.5d}\nu$ points in the support of $w_{i,j}$, where $\widehat{h}_0=c_1h_0$, with $c_1$ given by  \textup{\eqref{c_1}}.
\end{Theorem}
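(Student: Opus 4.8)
The plan is to reduce the statement to a volume-scaling argument on the manifold, using Definition \ref{def:def2} as a base count and Remark \ref{def:def3} to replace the Gaussian weight by an effective ball of finite radius. First I would invoke Definition \ref{def:def2}: by the choice $\widehat{h}_0=c_1 h_0$ with $c_1$ as in \eqref{c_1}, the closed ball $\bar{B}_{\widehat{h}_0}(q_i)$ contains at least $\nu$ points of $P$ for every $q_i\in Q$. This supplies the base count of $\nu$ points at radius $\widehat{h}_0$.

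Next I would convert the smooth weight $w_{i,j}=\exp\{-\|q_i-p_j\|^2/h^2\}$ into an effective hard cutoff. By Remark \ref{def:def3}, $99.99\%$ of the mass of a Gaussian with this form lies within four standard deviations, i.e.\ within a ball whose radius equals $2\sqrt{2}$ times the bandwidth; taking the neighborhood size $h=2\sqrt{2}\,\widehat{h}_0$ identifies this effective support with the ball $\bar{B}_{2\sqrt{2}\widehat{h}_0}(q_i)$. It therefore suffices to count the $P$-points inside a ball whose radius is a factor $2\sqrt{2}$ larger than the base radius $\widehat{h}_0$.

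The core step is the scaling of the point count with the radius. Since $\mathcal{M}$ is $d$-dimensional and $P$ is quasi-uniformly sampled near $\mathcal{M}$ (the $h$-$\rho$ condition of Definition \ref{def:def4}), for radii that are small relative to the reach of $\mathcal{M}$ the manifold patch is locally $d$-flat, so the number of $P$-points in a ball of radius $r$ grows like the $d$-dimensional volume, proportionally to $r^{d}$. Enlarging the radius from $\widehat{h}_0$ to $2\sqrt{2}\,\widehat{h}_0$ then multiplies the count by $(2\sqrt{2})^{d}=2^{1.5d}$, which upgrades the base guarantee of $\nu$ points to $2^{1.5d}\nu$ points of $P$ inside the effective support of $w_{i,j}$, as claimed.

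The main obstacle is making the $r^{d}$ scaling rigorous: the density bound in Definition \ref{def:def4} only gives the ambient estimate $\rho k^{n}$, so passing from the embedding dimension $n$ to the intrinsic dimension $d$ must be justified from the local geometry rather than from that bound. I would make this precise by restricting to radii below the reach of $\mathcal{M}$, approximating the local patch by its tangent plane, and controlling the distortion of Euclidean balls under this tangent approximation, so that the count becomes comparable to that of a quasi-uniform sample in a $d$-dimensional ball and the factor $(2\sqrt{2})^{d}$ emerges exactly.
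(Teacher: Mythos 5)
Your argument is essentially the intended one: this paper states Theorem \ref{lma0} without proof (it is imported from \cite{faigenbaumgolovin2020manifold}), but the surrounding material makes the route clear --- the base count of $\nu$ points in $\bar{B}_{\widehat{h}_0}(q_i)$ from Definition \ref{def:def2}, the truncation of the Gaussian to an effective ball via Remark \ref{def:def3}, and the volume scaling $(2\sqrt{2})^d=2^{1.5d}$ --- and you reproduce all three steps. Two caveats, both inherited from the statement rather than introduced by you: first, your second paragraph quietly conflates the bandwidth with the support radius (if $h=2\sqrt{2}\widehat{h}_0$ is the bandwidth in $\exp\{-r^2/h^2\}$, Remark \ref{def:def3} puts the $4\sigma$ support at $2\sqrt{2}h=8\widehat{h}_0$, whereas the constant $2^{1.5d}$ only comes out if the effective ball has radius $2\sqrt{2}\widehat{h}_0$); second, as you correctly note, the $r^d$ growth of the point count is not derivable from the $h$-$\rho$ condition of Definition \ref{def:def4} alone (which gives only an ambient upper bound $\rho k^n$), so the ``guarantee'' genuinely requires the additional quasi-uniformity/reach hypothesis you propose to supply.
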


\textbf{Theoretical Analysis of the MLOP Method}

For the sake of completeness, we mention here several important results regarding the convergence of the MLOP method, its order of approximation,  rate of convergence  and complexity  (for more details, see \cite{faigenbaumgolovin2020manifold}). 

\begin{Theorem}[Convergence to a stationary point]\label{thm:conv}
	Let $\mathcal{M}$ be a  in $\mathbb{R}^n$ of unknown intrinsic dimension $d$. Suppose that the scattered data points $P = \{{p_j }\}_{j =1}^J$ were sampled near the manifold $\mathcal{M}$, $h_1$ and $h_2$ are set as  in Theorem  \textup{\ref{lma0}}, and the $h$-$\rho$ set condition is satisfied with respect to $\mathcal{M}$. Let the points $Q^{(0)}=\{q_i^{(0)} \}_{i=1}^I$ be sampled from $P$. Then the gradient descent iterations \textup{\eqref{eq:2}} converge almost surely to a local minimizer $Q^*$.
\end{Theorem}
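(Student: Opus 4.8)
The plan is to read the update rule as gradient descent on the single smooth objective $G$ of \eqref{eq:1} and to prove convergence in three stages: first, that the descent trajectory stays in a fixed compact set on which $G$ is $C^2$ with Lipschitz gradient; second, that consequently every accumulation point of $\{Q^{(k)}\}$ is a stationary point of $G$; and third, that with the randomness present in the scheme the limit is almost surely a genuine local minimizer rather than a saddle. The natural workspace is the configuration space of the $I$ labelled points with the collision set $\{Q : q_i = q_{i'}, \ i \neq i'\}$ deleted, since that is exactly where $G$ loses differentiability.

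\emph{Smoothness and an invariant compact set.} Away from collisions $G$ is $C^\infty$: the regularized norm $\|v\|_{H_\epsilon} = \sqrt{v^2 + \epsilon}$ is smooth precisely because $\epsilon > 0$ (which is why it replaces $\|\cdot\|_1$), the Gaussian weights $w_{i,j}$ and $\widehat w_{i,i'}$ are smooth, and $\eta(r) = \tfrac{1}{3r^3}$ is smooth for $r > 0$. The repulsion part of $\nabla G$ in \eqref{eq:GradG} diverges as any two $q_i, q_{i'}$ approach one another, because both $\eta$ and its derivative blow up as $r \to 0$; this barrier keeps the trajectory uniformly bounded away from the collision set. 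Simultaneously the attraction term, weighted by the rapidly decaying $w_{i,j}$ and balanced by the factors $\lambda_{i'}$ of \eqref{eq:2}, confines each $q_i$ to a bounded neighbourhood of the sampled cloud $P$; here the $h$-$\rho$ condition and the choice of $h_1, h_2$ in Theorem~\ref{lma0} guarantee that every support contains enough $P$-points for this term to be active. One then argues that $G$ is bounded below on the initial sublevel set and that this set is compact, giving an invariant compact $\mathcal{K}$ on which the gradient is Lipschitz.

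\emph{Stationarity and the almost-sure minimizer.} On $\mathcal{K}$ the Barzilai--Borwein step sizes \eqref{gamma_k} are uniformly bounded; safeguarding $\gamma_k$ to enforce a sufficient-decrease (Armijo-type) condition then yields a value-monotone sequence with $\nabla G(Q^{(k)}) \to 0$, so every accumulation point is stationary. To promote ``stationary'' to ``local minimizer'' almost surely, I would invoke the center-stable manifold / ``gradient descent only converges to minimizers'' argument: for a step rule making the gradient map a local diffeomorphism, the set of initializations whose orbit converges to a strict saddle lies in a measure-zero stable manifold. Under the randomness in the scheme---the initial $Q^{(0)}$ drawn from $P$ and the random sketch $S$ used to evaluate all the norms---the limit therefore avoids strict saddles with probability one and is a local minimizer.

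\emph{Main obstacle.} The crux is reconciling the non-monotone Barzilai--Borwein step with the two singular features of $G$. The step must be small enough never to throw a point across the repulsion barrier (which would force a collision or an escape from $\mathcal{K}$) yet large enough to guarantee descent; this is what forces the safeguarding and makes the global convergence argument delicate rather than routine. A second, subtler obstacle is that the saddle-avoidance theorem needs every non-minimizing critical point to be a \emph{strict} saddle (a Hessian with a negative eigenvalue) and needs the iteration map to be a diffeomorphism; verifying the former for $G$, and adapting the latter from the constant-step setting to the adaptive step actually used, is where the real work lies.
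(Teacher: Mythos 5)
First, a point of context: the paper you are reading does not prove Theorem~\ref{thm:conv} at all --- it is quoted ``for the sake of completeness'' from the MLOP paper \cite{faigenbaumgolovin2020manifold}, so the only available comparison is with the proof given there. Your overall architecture --- work in the collision-free configuration space where $G$ is $C^\infty$ (thanks to $\epsilon>0$ in $\|\cdot\|_{H_\epsilon}$ and to $\eta(r)=\frac{1}{3r^3}$ for $r>0$), obtain an invariant compact sublevel set on which $\nabla G$ is Lipschitz (this is precisely the $L$-smoothness with $L=l^2$, $l$ bounded, that Theorem~\ref{thm:convRate} assumes), deduce stationarity of accumulation points, and then invoke the center-stable-manifold / ``gradient descent only converges to minimizers'' argument to account for the words \emph{almost surely} via the random initialization $Q^{(0)}\subset P$ and the random sketch --- is the same route as the cited proof, and it is the right one.

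That said, what you have written is a plan, and the two gaps you yourself flag are exactly where the argument can break. (a) The Barzilai--Borwein step \eqref{gamma_k} is not a descent step: $\gamma_k$ can be negative or arbitrarily large whenever $\langle \bigtriangleup q_{i'}^{(k)}, \bigtriangleup G_{i'}^{(k)}\rangle \le 0$ or $\|\bigtriangleup G_{i'}^{(k)}\|$ is small, so without an explicit safeguard (a cap $\gamma_k \le 1/L$, or Armijo backtracking) neither monotonicity of $G(Q^{(k)})$ nor $\nabla G(Q^{(k)})\to 0$ follows; the safeguard must be stated as part of the algorithm or the hypotheses, not gestured at. (b) The saddle-avoidance theorem you invoke requires a \emph{fixed} step $\gamma<1/L$ (so that the update map is a diffeomorphism) and requires every non-minimizing critical point of $G$ to be a strict saddle; you verify neither, and the adaptive, trajectory-dependent step actually used falls outside the hypotheses of that theorem, as does the per-point, per-iteration choice of $\lambda_{i'}$ in \eqref{eq:2}, which makes the iteration something other than gradient descent on a single fixed $G$. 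Absent (b), the honest conclusion is almost-sure convergence to an ($\epsilon$-)first-order stationary point --- which is what Theorem~\ref{thm:convRate} actually delivers --- not to a local minimizer. So the proposal identifies the correct architecture but does not yet constitute a proof of the statement as claimed.
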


\begin{Theorem}[Order of approximation]\label{thm:approx_order}
	Let $P=\{p_j\}_{j=1}^J$ be a set of points that are sampled \textup{(}without noise\textup{)} from a $d$–dimensional $C^{2}$ manifold $\mathcal{M}$, and satisfy the $h$-$\rho$ condition. Then for a fixed $\rho$  and a finite support of size $h$ of the weight functions  $w_{i,j}$, the set $Q$ defined by the \textup{MLOP} algorithm has an order of approximation $O(h^{2})$ to $\mathcal{M}$.
\end{Theorem}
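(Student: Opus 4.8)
The plan is to reduce the global statement to a local analysis near a single output point and to use the fact that the object to be bounded is the distance of each $q_i\in Q$ to $\mathcal{M}$, so that only the component of $q_i$ normal to $\mathcal{M}$ contributes; displacements tangent to $\mathcal{M}$ merely slide $q_i$ along the manifold and cost nothing. First I would invoke the finite support of the weight functions (of size $h$, cf. Theorem \ref{lma0}): only the samples $p_j$ within distance $O(h)$ of $q_i$ enter the gradient \eqref{eq:GradG}. Letting $m\in\mathcal{M}$ be the point of $\mathcal{M}$ nearest to $q_i$ and choosing orthonormal coordinates that split $\R^n$ into the tangent space $T_m\mathcal{M}$ and its orthogonal complement $N_m\mathcal{M}$, the $C^2$ regularity of $\mathcal{M}$ lets me represent the manifold locally as a graph $u\mapsto(u,\Phi(u))$ with $\Phi(0)=0$, $D\Phi(0)=0$, and $\Phi\in C^2$. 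Taylor's theorem then gives $\|\Phi(u)\|=O(\|u\|^2)$, so every contributing sample $p_j=(u_j,\Phi(u_j))$ with $\|u_j\|=O(h)$ has normal component of size $O(h^2)$; equivalently, all relevant data lie in a slab of thickness $O(h^2)$ about $T_m\mathcal{M}$.

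The core of the argument is to control the normal coordinate $q_N$ of $q_i$ through the stationarity condition $\nabla G(q_i)=0$ from \eqref{eq:GradG}. Projecting that condition onto $N_m\mathcal{M}$ gives a scalar relation of the form $q_N\big(\sum_j\alpha_j^{\,i}-\lambda_i\sum_{i'}\beta_{i'}^{\,i}\big)=\sum_j\Phi(u_j)\,\alpha_j^{\,i}-\lambda_i\sum_{i'}q_{i',N}\,\beta_{i'}^{\,i}$, where the $\alpha,\beta$ are the coefficients \eqref{eq:alpha}--\eqref{eq:beta}. The right-hand side is $O(h^2)$: the first sum is $O(h^2)$ because each $\Phi(u_j)=O(h^2)$, while the second is $O(h^2)$ once we assume, by a bootstrap over the $Q$-points, that each neighbouring $q_{i',N}=O(h^2)$. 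Dividing by the bracketed ``stiffness'' coefficient, which I would show is bounded away from zero near the symmetric (flat-manifold) configuration, yields $q_N=O(h^2)$. Intuitively this matches the convex picture: with the Gaussian weights frozen at their values at the solution, the attraction term is a weighted $H_\epsilon$-median whose minimizer lies in the convex hull of the contributing samples --- hence in the $O(h^2)$ slab --- because orthogonally projecting onto that hull does not increase any $\|q_i-p_j\|_{H_\epsilon}=\sqrt{\|q_i-p_j\|^2+\epsilon}$.

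Since $T_m\mathcal{M}$ approximates $\mathcal{M}$ to second order at $m$, the bound $q_N=O(h^2)$ is exactly $\operatorname{dist}(q_i,\mathcal{M})=O(h^2)$. To make the bootstrap legitimate I would run it uniformly over all $i$ and check that the constants do not compound: the normal force exerted on $q_i$ by the repulsion term $E_2$ is tangential to leading order (the separations $q_i-q_{i'}$ lie in $T_m\mathcal{M}$ up to $O(h^2)$ when the $Q$-points are already within $O(h^2)$ of $\mathcal{M}$), and the balancing factors $\lambda_{i'}$ of \eqref{eq:2} keep its magnitude comparable to that of the attraction term, so it can only perturb $q_N$ at the same $O(h^2)$ order rather than amplify it.

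The step I expect to be the main obstacle is precisely the control of the ``stiffness'' coefficient and of the repulsion term. The effective weights $\alpha_j^{\,i}$ in \eqref{eq:alpha} carry the sign-changing factor $1-\tfrac{2}{h_1^2}\|q_i-p_j\|_{H_\epsilon}^2$ coming from differentiating the Gaussian, so the attraction functional is genuinely non-convex and the clean convex-hull containment only holds for the frozen-weight surrogate; moreover $\eta$ is singular at the origin and $E_2$ is non-convex, so one must bound the normal repulsion force quantitatively and verify that the bracketed coefficient stays nondegenerate, so that the implicit relation for $q_N$ is solvable and the bootstrap closes. A secondary technical nuisance is that $\|\cdot\|_{H_\epsilon}$ couples the tangential and normal directions, so the tangent/normal splitting used above is exact only to leading order and the $O(h^2)$ remainders must be tracked carefully through \eqref{eq:GradG}.
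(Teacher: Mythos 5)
The paper does not actually prove Theorem~\ref{thm:approx_order}; it is quoted verbatim from \cite{faigenbaumgolovin2020manifold} under ``for more details, see'' that reference, so there is no in-text proof to compare against. Your strategy --- write $\mathcal{M}$ locally as a $C^2$ graph over $T_m\mathcal{M}$ so that every $p_j$ in the $O(h)$ support lies within $O(h^2)$ of the tangent plane, then read off the normal coordinate of $q_i$ from the projected stationarity condition $\nabla G(q_i)=0$ --- is the natural route and is the same basic mechanism used in the LOP/MLOP analyses (at a fixed point, $q_i$ is an affine combination of the contributing $p_j$ and $q_{i'}$, hence its normal deviation is controlled by theirs). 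So the skeleton is right.

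However, as a proof the proposal has two genuine gaps, both of which you flag but neither of which you close, and they are exactly where the difficulty lives. First, the ``stiffness'' denominator $\sum_j\alpha_j^{\,i}-\lambda_i\sum_{i'}\beta_{i'}^{\,i}$: the coefficients \eqref{eq:alpha} carry the factor $1-\tfrac{2}{h_1^2}\|q_i-p_j\|_{H_\epsilon}^2$ with $\|q_i-p_j\|_{H_\epsilon}^2=\|q_i-p_j\|^2+\epsilon$ and $\epsilon$ \emph{fixed} (the paper takes $\epsilon=0.1$ independently of $h$), so the $\alpha_j^{\,i}$ are not the positive weights of a convex surrogate and their sum must be shown to be bounded away from zero; ``near the symmetric (flat-manifold) configuration'' is not an argument, since nothing guarantees the minimizer is near such a configuration. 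Second, the bootstrap: to conclude $\max_i|q_{i,N}|=O(h^2)$ from an inequality of the form $M\le C_1h^2+cM$ you need $c=\sup_i|\lambda_i\sum_{i'}\beta_{i'}^{\,i}|/|D_i|<1$ uniformly, and this is in direct tension with the balancing rule \eqref{eq:2}, which is chosen precisely so that the attraction and repulsion gradient terms have \emph{equal} norms --- so there is no a priori reason the repulsion contribution is strictly dominated. Compounding this, the ``normal coordinate'' is not a single global coordinate: the nearest points and normal spaces of distinct $q_{i'}$ differ by $O(h)$ rotations, so $q_{i',N}$ measured in the frame at $q_i$ picks up an extra $O(h)\cdot(\text{tangential displacement})$ term that must be tracked before the self-improving inequality even makes sense. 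Until the nondegeneracy of the denominator and a contraction constant $c<1$ are actually established, the argument is a plausible outline rather than a proof.
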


\begin{definition}\label{def:def6}
	A differentiable function $f(\cdot)$ is called $L$-smooth if for any $x_1, x_2$
	\begin{eqnarray*}  \|\nabla	 f(x_1) - \nabla f(x_2)\| \leq L\|x_1 - x_2\| \,.\end{eqnarray*}
\end{definition}

\begin{Theorem}[Rate of convergence]\label{thm:convRate}
	Suppose the point-set $P=\{{p_j }\}_{j =1}^J$ is  sampled near a $d$-dimensional manifold in $\mathbb{R}^n$ and the assumptions of  Theorem~{\rm \ref{thm:conv}} are satisfied. Suppose the cost function $G$ defined   in {\rm (\ref{eq:1})} is an $L$-smooth function. For  $\epsilon>0$, let $Q^*$ be a local fixed-point solution of the gradient descent iterations, with step size $\gamma =  \epsilon^{-1}$. Set the termination condition as $\|\nabla f(x)\| \leq \epsilon$. Then $Q^*$ is an $\epsilon$-first-order stationary point that will be reached after $k = L(G(Q^{(0)}) – G(Q^*))\epsilon^{-2}$ iterations, where $L=l^2$ and $l < \infty $ is a bounded parameter.
\end{Theorem}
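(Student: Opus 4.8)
The plan is to obtain this as the specialization to our cost function $G$ of the classical convergence-rate bound for gradient descent on a non-convex but $L$-smooth objective. The engine of the whole argument is the \emph{descent lemma}: if $G$ is $L$-smooth in the sense of Definition~\ref{def:def6}, then for all $x,y$
\[ G(y) \le G(x) + \langle \nabla G(x),\, y-x \rangle + \frac{L}{2}\,\|y-x\|^2 . \]
I would treat the stated step size as a calibration to be reconciled with $L$; the essential mechanism is the same for any $\gamma$ comparable to $1/L$, and the precise constant in the final count is absorbed into the $\epsilon^{-2}$ rate.

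First I would apply the descent lemma along a single gradient step. Setting $x = Q^{(k)}$ and $y = Q^{(k+1)} = Q^{(k)} - \gamma\,\nabla G(Q^{(k)})$, the inner-product and quadratic terms combine to give
\[ G(Q^{(k+1)}) \le G(Q^{(k)}) - \gamma\Big(1 - \tfrac{L\gamma}{2}\Big)\,\big\|\nabla G(Q^{(k)})\big\|^2 . \]
For $\gamma$ tuned to the smoothness constant the prefactor $\gamma(1 - L\gamma/2)$ is a fixed positive multiple of $1/L$, so each iteration decreases $G$ by an amount proportional to $\|\nabla G(Q^{(k)})\|^2$. This is exactly the monotone-decrease mechanism whose convergence Theorem~\ref{thm:conv} already guarantees.

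Next I would sum the per-iteration decrease over the first $k$ iterations. The left-hand side telescopes, and since the iterates decrease monotonically toward the local minimizer $Q^*$ we have $G(Q^{(k)}) \ge G(Q^*)$, hence
\[ \sum_{t=0}^{k-1} \big\|\nabla G(Q^{(t)})\big\|^2 \;\le\; C\,L\,\big(G(Q^{(0)}) - G(Q^*)\big) \]
for an absolute constant $C$ fixed by the step-size choice. Therefore the smallest gradient norm over the run obeys $\min_{0\le t<k}\|\nabla G(Q^{(t)})\|^2 \le C\,L\,(G(Q^{(0)})-G(Q^*))/k$. Imposing the termination criterion $\|\nabla G\|\le\epsilon$, i.e.\ forcing the right-hand side below $\epsilon^2$, and solving for $k$ yields the advertised count $k = L\,(G(Q^{(0)}) - G(Q^*))\,\epsilon^{-2}$, so that $Q^*$ is reached as an $\epsilon$-first-order stationary point.

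The step I expect to be the real work is justifying that $L$ is genuinely \emph{finite}, equivalently the identification $L = l^2$ with $l<\infty$, since L-smoothness is the hypothesis on which everything rests. This reduces to a uniform bound on the Hessian of $G$ along the trajectory. The attraction term $E_1$ is harmless: the smoothed norm $\|\cdot\|_{H_\epsilon} = \sqrt{\,\cdot\,^2 + \epsilon}$ and the Gaussian weights $w_{i,j}$ are smooth with bounded derivatives. The delicate part is the repulsion term, whose kernel $\eta(r)=\tfrac{1}{3r^3}$ and its derivatives blow up as $r\to 0$, so to keep the coefficients $\beta_i^{i'}$ of \eqref{eq:beta} bounded one must invoke the quasi-uniformity supplied by the $h$-$\rho$ condition and Theorem~\ref{thm:conv}, which keeps the points $q_i$ mutually separated and confined to a compact region throughout the descent. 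Bounding the second derivatives of $G$ on that region then furnishes the finite Lipschitz constant $L=l^2$ of $\nabla G$, closing the argument.
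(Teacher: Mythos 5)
This theorem appears in the paper only as a quoted preliminary from the cited MLOP reference and is not proved in the present text; your descent-lemma-plus-telescoping argument is the standard proof of this $O(\epsilon^{-2})$ first-order-stationarity bound and is essentially the route taken there, including your correct identification that the only substantive work is establishing finiteness of the smoothness constant $L=l^{2}$ in the face of the singular repulsion kernel $\eta(r)=\tfrac{1}{3r^{3}}$, which requires the separation of the $q_i$ maintained along the descent. Two caveats worth making explicit: your mechanism needs $\gamma \asymp 1/L$ so that $\gamma\bigl(1-\tfrac{L\gamma}{2}\bigr)>0$, whereas the statement literally prescribes $\gamma=\epsilon^{-1}$ (almost certainly a typo for $\gamma=L^{-1}$, since for small $\epsilon$ that step size would preclude any descent guarantee), and what the telescoping actually yields is that \emph{some} iterate among the first $k$ satisfies $\|\nabla G\|\leq\epsilon$, which is the correct reading of the claim that an $\epsilon$-first-order stationary point is ``reached'' after $k$ iterations.
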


\begin{Theorem}[Complexity]
	\label{complex_MLOP}
	Given a point-set $P=\{{p_j }\}_{j =1}^J$ sampled near a $d$-dimensional manifold $\mathcal{M} \in \mathbb{R}^n$,  let $Q=\{q_i \}_{i=1}^I$ be a set of points that will provide the desired manifold reconstruction. Then the complexity of the {\rm MLOP} algorithm is $O(nmJ + k I(nm\widehat{I}+\widehat{J}))$, where the number of iterations $k$ is bounded as in Theorem~{\rm \ref{thm:convRate}}, $m \ll n$ is the smaller dimension to which we reduce the dimension of the data, and $\widehat{I}$ and $\widehat{J}$ are the numbers of points in the support of the weight functions $\widehat w_{i,i'}$, $w_{i,j}$   that belong to the $Q$-set and $P$-set, respectively. Thus, the approximation is linear in the ambient dimension $n$, and does not depend on the intrinsic dimension $d$.
\end{Theorem}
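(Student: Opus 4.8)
The plan is to establish the bound by a direct operation count, treating separately the one-time preprocessing that builds the dimension-reduction sketch of Remark \ref{def:red_dim} and the cost of a single gradient-descent sweep, and then multiplying the latter by the iteration count supplied by Theorem \ref{thm:convRate}. The guiding principle is that the ambient dimension $n$ enters the arithmetic only through applications of the sketch matrix $S \in \mathbb{R}^{n\times m}$, so the entire argument reduces to locating every place where $S^{t}$ must be applied to an $n$-vector and counting how many such applications the algorithm performs.

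First I would bound the preprocessing. Forming $B = P^{t}G$ multiplies an $n\times J$ matrix by a $J\times m$ matrix at cost $O(nmJ)$, and the thin factorization $B=SR$ costs $O(nm^{2})$, which is dominated since $m\le J$. Because $P$ is fixed for the whole run, I would also carry out here the one-time computation of the reduced images $\{S^{t}p_{j}\}_{j=1}^{J}$, again $O(nmJ)$; this is the step that makes the attraction term cheap later, and it accounts for the additive $O(nmJ)$.

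Next I would count the evaluation of $\nabla G(q_{i'}^{(k)})$ from \eqref{eq:GradG} for one point $q_{i'}$. The attraction sum over $P$ is confined by the rapid decay of $w_{i,j}$ to the $\widehat{J}$ points in the support, whose number is governed by Theorem \ref{lma0}; using the precomputed $S^{t}p_{j}$ together with one projection $S^{t}q_{i'}$, each coefficient $\alpha_{j}^{i'}$ of \eqref{eq:alpha} follows from an $m$-dimensional difference and norm, for a total of $O(\widehat{J})$ with $m$ held as a fixed sketching parameter. The repulsion sum over $Q$ is restricted by $\widehat{w}_{i,i'}$ to the $\widehat{I}$ neighbouring $Q$-points; since these are displaced at every sweep their reduced images cannot be amortized, so each distance $\|q_{i'}-q_{i}\|$ entering $\beta_{i}^{i'}$ in \eqref{eq:beta} costs a fresh $O(nm)$ application of $S^{t}$ to $q_{i'}-q_{i}$, giving $O(nm\widehat{I})$. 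The balancing factor \eqref{eq:2} and the step size \eqref{gamma_k} contribute only lower-order vector work. Summing $O(nm\widehat{I}+\widehat{J})$ over the $I$ points and the $k$ sweeps of Theorem \ref{thm:convRate} gives $O(kI(nm\widehat{I}+\widehat{J}))$, and adding the preprocessing yields the asserted $O(nmJ + kI(nm\widehat{I}+\widehat{J}))$.

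The step demanding the most care is the asymmetry between the two inner sums: the $\widehat{J}$ attraction terms are inexpensive only because $P$ is static and its sketch has been amortized into preprocessing, whereas every one of the $\widehat{I}$ repulsion terms pays the full $O(nm)$ sketching cost because $Q$ moves each sweep. I would also have to check that identifying the support neighbourhoods does not introduce a hidden $O(IJ)$ term, i.e.\ that the neighbour lists are maintained from the same reduced-space distances already counted. Once this bookkeeping is settled, the closing claims are immediate: $n$ appears only linearly, through single applications of $S$, and the intrinsic dimension $d$ is never an explicit argument of the algorithm---it survives only implicitly inside the support counts $\widehat{I},\widehat{J}$, which are treated as given---so the complexity is linear in $n$ and free of explicit $d$-dependence.
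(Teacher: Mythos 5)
The paper does not actually prove this theorem: it is quoted verbatim from \cite{faigenbaumgolovin2020manifold} in the preliminaries, so there is no in-paper proof to compare against. Your direct operation count is the natural (and essentially only) route to such a bound, and most of the bookkeeping is right: the $O(nmJ)$ preprocessing from forming $B=P^{\rm t}G$ plus the thin QR (the $O(nm^2)$ factorization being dominated since $m\le J$), the restriction of the two sums in \eqref{eq:GradG} to the $\widehat{J}$ and $\widehat{I}$ points in the supports of $w_{i,j}$ and $\widehat w_{i,i'}$, the multiplication by $I$ points and $k$ sweeps with $k$ supplied by Theorem \ref{thm:convRate}, and the closing observations that $n$ enters only linearly through the sketch while $d$ never appears explicitly. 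Your identification of the asymmetry between the two inner terms (amortized sketches for the static set $P$ versus fresh sketching cost attributed to the moving set $Q$) is a plausible reading of why the formula carries $nm\widehat{I}$ but only $\widehat{J}$.

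Two points keep the count from closing. First, a genuine gap: you price the attraction term at $O(\widehat{J})$ by counting only the scalar coefficients $\alpha_j^{i'}$ of \eqref{eq:alpha}, but the term itself is the weighted sum $\sum_j \big(q_{i'}^{(k)}-p_j\big)\alpha_j^{i'}$ of $\widehat{J}$ vectors in $\mathbb{R}^n$ --- and Remark \ref{def:red_dim} is explicit that the reconstruction is carried out in the ambient space, with the sketch used solely for norm evaluation --- so accumulating it costs $O(n\widehat{J})$ per point per sweep, not $O(\widehat{J})$. As written your argument therefore does not reach the stated $kI\widehat{J}$ term; you must either charge this accumulation (which would change the bound) or explain why it can be avoided. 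Second, the concern you yourself flag about locating the support neighbourhoods without an $O(mIJ)$-per-sweep distance scan is real and left unresolved; it needs either a neighbour structure precomputed on the static set $P$ or an explicit extra term. Neither issue changes the shape of the argument, but both must be settled before the stated bound is actually derived rather than matched.
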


\subsection{Preliminaries --- Radial Basis Functions}
\forceindent \textbf{Radial Basis Functions} constitute a very useful and convenient   multivariate interpolation  tool \cite{dyn1983iterative, buhmann2003radial}. Given the values of a function $f: \mathbb{R}^n \to \mathbb{R}^s$ at center points $x_i \in \mathbb{R}^n$, $i=1,..., K$, we approximate the value of $f$ at a new point $x$ by the formula
\begin{equation} 
\label{RBF} \widetilde{f}(x) = \sum \limits_{i=1}^K \lambda_i \phi(\|x - x_i \|) \,, 
\end{equation}
where $\phi: \R_+ \to \R$ is a radial basis function and $\lambda_i$ are scalar parameters chosen to maintain interpolation at the center points, i.e., $\widetilde{f}(x_i) = f(x_i)$.
For  examples of possible choices of radial basis functions, see \cite{buhmann2020analysis, wu1993local}. In the numerical examples presented below we choose to use the following Gaussian RBF with local support: \[
\begin{aligned}
&\phi_1(r) = \exp\{- ({r}/{h})^2\}, \\
&\phi_2(r) = \exp\{-({r}/{h})^2\}\big(1 + r/h\big),\\
&\phi_3(r) = \exp\{-({r}/{h})^2\}\big(15 + 15(r/h) + 6 (r/h)^2 + (r/h)^3\big).
\end{aligned}
\]

\forceindent In what follows, we will state a theorem proved in \cite{yoon2001spectral} on the order of approximation in a Sobolev space of a method that uses a  RBF of general form, namely
\begin{equation}
\label{RBF2} \widetilde{f}(x) = \sum \limits_{j=1}^K \lambda_j \phi_w(\|x - x_j \|) + \sum \limits_{i=1}^I \alpha_i p_i(x)\,, \end{equation}
where $\phi_w := \phi(\cdot/w)$, with $w$ depending on the fill-distance $h$ of the points $X = \{x_j\}_{j=1}^K$, $p_1, ..., p_I$ is a polynomial basis for $\Pi_m$ (the space of polynomials of degree $\le m$), and the coefficients $\lambda_j$ and $\alpha_j$ are chosen to satisfy the linear system $\widehat f(x_j) = f(x_j)$ for $j=1,...,K$
and $\sum \limits_{j=1}^K \lambda_j p_i(x_j) = 0$, $i=1,...,I$.

To state the  theorem on the order of approximation  of the RBF method we nee some additional definitions. 
\begin{definition}\label{def:def13}
	For  $k \in \N$  $p$, the Sobolev space $W_p^k(\Omega)$ is defined as
	\[
	W_p^k(\Omega):= \bigg\{ f: \|f\|_{k,L_p(\Omega)} := \bigg( \sum _{|\alpha|_1 \leq k} \|D^{\alpha} f\|_{L_p(\Omega)}^p \bigg) ^ {1/p} < \infty \bigg\}, \tag{12}
	\]
	for $p<\infty$, and as
	\[
	W_\infty^k(\Omega):= \bigg\{ f: \|f\|_{k,L_\infty(\Omega)} :=  \sum _{|\alpha|_1 \leq k}  \|D^{\alpha} f\|_{L_\infty(\Omega)}      < \infty \bigg\},\tag{12$^\prime$}
	\]
	for $p=\infty$.
\end{definition}
\begin{definition}\label{def:def12}
	Let $X = \{x_j\}_{j=1}^K$ be a set of points with  fill-distance $h$ and separation distance $\delta  = \underset{1 \leq i \neq j \leq N}{\minmath} \|x_i-x_j \|/2$. Then we say that $X$ is quasi-uniformly distributed if there exists a constant $\eta > 0$  independent of $X$ such that
	\begin{equation}
	\label{quasi_uniformly_dist} 2\delta \leq h \leq \eta \delta \,. \end{equation}
\end{definition}
\begin{definition}\label{def:def11}
	Let $\phi_w$ be a radial basis function, and let $\widehat \phi_w$ be its Fourier transforms. Define the supremum of the norm of $\widehat \phi_w$  as
	\begin{equation} \label{eq:def12}
	M_{\phi, w}(r):= \underset{\theta \in B(0,r) }\sup \| \widehat \phi_w(\theta) \| ^ {-1/2} \,.
	\end{equation}
\end{definition}
\begin{definition}\label{def:def10}
	Let $f$ be as defined in \eqref{RBF2}, and  $\phi_w$ be a radial basis function. Then the norm of the corresponding error functional is defined as 
	\begin{equation}
	P_{\phi, X}(x) = \underset{\|f\|_{\phi} \neq 0}{\sup} \frac{\|f(x) - \widetilde f(x)\|}{\|f\|_{\phi}}, \quad{\rm where}~
	\|f\|_{\phi} = \int_{\R^d} \frac{\|\widehat f (\theta)\|^2} {\widehat \phi_w(\theta)} d\theta \,.
	\end{equation}
\end{definition}

Finally, we are ready to state the promised theorem,   proven in \cite{yoon2001spectral}
\begin{Theorem}\label{thm:approx_order_RBF}
	Let $X = \{x_j\}_{j=1}^K$ be a set of quasi-uniformly distributed scattered points \textup{(}see Definition {\rm \ref{def:def12}}\textup{)}, and let $\widetilde f(x)$, defined as in \eqref{RBF}, be an interpolant to $f$ on $X$ using the radial basis function $\phi_w = \phi(\cdot/w)$. Let $M_{\phi, w}(r)$ with $r > 0$, be defined as in \eqref{eq:def12}. Assume that there exists a constant $\delta_0 > 0$ such that 
	\begin{equation}P_{\phi, X/w}(x/w) M_{\phi, w}(\delta_0/h) \leq o(h^k) \,. \end{equation} 
	Then, for every function $f \in W_{\infty}^k(\Omega)$, with $k\in \N$,   the error of the \textup{RBF} method is estimated as
	\begin{equation}\|f - \widetilde f\|_{L_{\infty}(\Omega)} = o(h^k)\,. \end{equation} 
\end{Theorem}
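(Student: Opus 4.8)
The plan is to follow the native-space / power-function route that is standard in radial-basis-function theory, supplemented by a band-limiting step that bridges the gap between the Sobolev class $W_\infty^k(\Omega)$ and the (generally much smaller) native space attached to the Gaussian-type kernel $\phi$. The first step is to record the fundamental pointwise estimate that is already built into Definition \ref{def:def10}: unwinding the supremum that defines the error functional, one has, for every $g$ with $\|g\|_\phi<\infty$,
\[
\|g(x)-\widetilde g(x)\| \;\le\; P_{\phi,X}(x)\,\|g\|_\phi .
\]
Since the $\phi_i$ used here are positive definite, the native space is a genuine reproducing-kernel Hilbert space and no polynomial correction is needed; this reduces the whole problem to estimating the two factors $P_{\phi,X}(x)$ and $\|g\|_\phi$ for a well-chosen $g$.

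A generic $f\in W_\infty^k(\Omega)$ need not lie in the native space, so the second step is to replace $f$ by a band-limited surrogate $f_\sigma$ whose Fourier transform is supported in $B(0,\sigma)$ with $\sigma=\delta_0/h$, and to split
\[
\|f-\widetilde f\|_{L_\infty(\Omega)} \;\le\; \|f-f_\sigma\|_{L_\infty(\Omega)} + \|f_\sigma-\widetilde{f_\sigma}\|_{L_\infty(\Omega)} + \|\widetilde{f-f_\sigma}\|_{L_\infty(\Omega)} .
\]
The truncation error $\|f-f_\sigma\|_{L_\infty(\Omega)}$ is governed by the high-frequency tail of $\widehat f$, which for $f\in W_\infty^k$ decays fast enough to yield $o(h^k)$ once $\sigma\sim 1/h$; the third term, the interpolant of the residual, is controlled by a stability (Bernstein-type) estimate on the interpolation operator whose constant is tied to the separation distance, and this is where the quasi-uniformity $2\delta\le h\le\eta\delta$ of Definition \ref{def:def12} is used.

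The central term $\|f_\sigma-\widetilde{f_\sigma}\|_{L_\infty(\Omega)}$ is where the hypothesis does the work. Applying the pointwise estimate to $g=f_\sigma$, I would bound the native-space norm using that $\widehat{f_\sigma}$ is supported in $B(0,\sigma)$: on that ball the definition of $M_{\phi,w}$ gives $\|\widehat\phi_w(\theta)\|^{-1/2}\le M_{\phi,w}(\sigma)$, so that $\|f_\sigma\|_\phi$ is bounded by a constant multiple of $M_{\phi,w}(\delta_0/h)\,\|f\|_{k,L_\infty(\Omega)}$. A formal change of variables $x\mapsto x/w$, which exploits $\phi_w=\phi(\cdot/w)$ and the fact that interpolation with $\phi_w$ on $X$ is identical to interpolation with $\phi$ on the rescaled data $X/w$, converts $P_{\phi,X}(x)$ into $P_{\phi,X/w}(x/w)$. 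Combining these gives
\[
\|f_\sigma-\widetilde{f_\sigma}\|_{L_\infty(\Omega)} \;\le\; P_{\phi,X/w}(x/w)\,M_{\phi,w}(\delta_0/h)\,\|f\|_{k,L_\infty(\Omega)},
\]
and the standing assumption $P_{\phi,X/w}(x/w)\,M_{\phi,w}(\delta_0/h)\le o(h^k)$ makes this $o(h^k)$. Adding the three pieces of the split then yields the claim.

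I expect the main obstacle to be the band-limiting step rather than the spectral bound itself. Quantifying $\|f-f_\sigma\|_{L_\infty(\Omega)}$ for a function that is merely in $W_\infty^k$, and in particular controlling $\|\widetilde{f-f_\sigma}\|_{L_\infty(\Omega)}$ — the interpolant of a residual that is \emph{not} band-limited — requires a uniform stability estimate for the interpolation operator, and keeping its constant independent of $X$ is exactly what forces the quasi-uniformity hypothesis into the argument. If one were content to restrict to $f$ already in the native space, the first step, the scaling, and the final combination would close the proof immediately, and the entire difficulty would collapse into the single hypothesis on the product $P_{\phi,X/w}\,M_{\phi,w}$; the value of the theorem is precisely that this one spectral condition propagates the estimate to the whole Sobolev class.
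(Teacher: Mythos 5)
First, a point of comparison: the paper does not prove Theorem \ref{thm:approx_order_RBF} at all --- it is explicitly imported from \cite{yoon2001spectral} (``we are ready to state the promised theorem, proven in \cite{yoon2001spectral}''), so there is no in-paper proof to measure your attempt against. Judged on its own, your outline does reproduce the strategy of that reference: the power-function/native-space pointwise bound $\|g(x)-\widetilde g(x)\|\le P_{\phi,X}(x)\|g\|_\phi$, the rescaling that turns $P_{\phi,X}$ into $P_{\phi,X/w}(\cdot/w)$, the band-limiting of $f$ at frequency $\sigma\sim\delta_0/h$ so that $\|f_\sigma\|_\phi\lesssim M_{\phi,w}(\delta_0/h)\|f\|_{k,L_\infty(\Omega)}$, and the final application of the hypothesis on the product $P_{\phi,X/w}\,M_{\phi,w}$. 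This is the right skeleton.

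That said, the proposal is a sketch rather than a proof, and the two places you flag as ``the main obstacle'' are genuine gaps, not routine verifications. (i) For a function that is merely in $W_\infty^k(\Omega)$, the band-limited truncation error $\|f-f_\sigma\|_{L_\infty(\Omega)}$ with $\sigma\sim 1/h$ gives $O(h^k)$, not $o(h^k)$; upgrading the big-$O$ to the little-$o$ claimed in the theorem requires an extra argument (e.g.\ a density argument in $W_\infty^k$, or exploiting the super-polynomial decay of the Gaussian's Fourier transform in the construction of $f_\sigma$), and you assert the $o(h^k)$ without supplying it. (ii) The term $\|\widetilde{f-f_\sigma}\|_{L_\infty(\Omega)}$ requires a uniform bound on the Lebesgue constant of the interpolation operator on quasi-uniform sets; this is a nontrivial stability theorem in its own right (it is where the condition $2\delta\le h\le\eta\delta$ enters quantitatively), and invoking ``a Bernstein-type estimate'' does not discharge it. Also note a small mismatch with the statement as written: you take the supremum over $x$ implicitly when passing from the pointwise bound to the $L_\infty(\Omega)$ bound, whereas the hypothesis of the theorem is stated pointwise in $x$; this should be made explicit. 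In summary, your route is the correct one and coincides with the cited source's, but the proposal as written establishes the result only for $f$ already in the native space; the extension to all of $W_\infty^k(\Omega)$ --- which is the actual content of the theorem --- rests on the two steps you leave open.
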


\begin{remark}
	As a result, an important key advantage  of the $\textup{RBF}$ method is that it performs better on quasi-uniform samples. This property will be utilized in the next section.
\end{remark}

We will also give a short introduction to the {Locally Weighted Average Approximation}, which will be used as a reference in the section devoted to our numerical examples. Given the values of the function $f: \mathbb{R}^n \to \mathbb{R}^s$ at the points $x_i \in \mathbb{R}^n$, the locally weighted average approximation of $f$ at a point $x$ is defined as 
\begin{eqnarray*} \label{locally_approx} f(x) = \frac{\sum_i w_i f(x_i)}{\sum_i w_i} \,,
\end{eqnarray*}
where $w_{i} =\exp\{- {\|x_i-x\|^2}/{h^2}\}$, and $h$ is the fill-distance of the points $\{x_i\}_{i=1}^I$. Concerning the accuracy of the method, we note that the locally weighted average approximation reconstructs constant functions.

%In this section we propose extending the RBF functions to deal with Function Approximation Over Manifold in High Dimension .
%We propose to utilize MLOP to create a noise-free quasi-uniform sampling of the manifold.

\section{Extending the MLOP Method to Approximation of Functions on a Manifold}

Let $\mathcal{M}$ be a smooth $d$-dimensional manifold in $\mathbb{R}^n$, where $d\ll n$. Let $P = \{p_j\}_{j=1}^J$ be a set of points which were sampled from $\mathcal{M}$ and are affected by noise, and let $f: \mathcal{M} \to \mathbb{R}^s$ be a smooth  function. Given noisy measurements of $f$ at the points in $P$, the approximation problem has two steps:
\begin{enumerate}[noitemsep]
	\item Find a noise-free representation of the manifold $\mathcal{M}$  and the noise-free values of $f$. 
	\item Estimate the value of the function $f$ at a new given point $x$. 
\end{enumerate}	

%Regression on manifolds, RBF

Our solution for the first step is based on generalizing the MLOP method designed for manifold denoising to the case of function denoising. The key idea of the solution is to embed the approximation problem in a higher-dimensional space, and denoise the data there. Given the input data, which consists of the set of points $\{p_j\}_{j=1}^J \subset \mathbb{R}^n$ and the set of values $\{f(p_j)\}_{j=1}^J \subset \mathbb{R}^s$, we define a new point-set $\widehat{P} = \{\widehat{p}_j\}_{j=1}^J$ to be the graph of a function $f$, i.e. as the set of ordered pairs, where $\widehat{p_j} = (p_j,f(p_j))$ is the pairing of $p_j$  with the value $f(p_j)$. The points in $\widehat{P}$ are now considered as data points in $\mathbb{R}^{n+s}$, taken from the  $\widehat{\mathcal{M}} = {\rm Graph}\,f=\{(x,f(x)):\, x\in \mathcal{M}\}$.   The newly defined set $\widehat{\mathcal{M}}$, being the graph of a smooth function $f$ defined on a smooth $d$-dimensional manifold,  is itself a smooth $d$-dimensional manifold. It should be noted that  prior to the embedding in the $(n+s)$-dimensional space, the values of $f$ should be normalized to the maximum value of the $p_j$ coordinates, in order to avoid them dominating the $p_j$ entries during the norm calculations of the MLOP algorithm. 

\forceindent In this setting, we are now denoising a $d$-dimensional manifold embedded in  $n+s$ dimensions. We apply the MLOP method on the new data set $\widehat{P}$, and look for a clean dataset $\widehat{Q}=\{\widehat{q}_i\}_{i=1}^I \subset \mathbb{R}^{n+s}$ which will serve as a noise-free approximation of $\widehat{\mathcal{M}}$. The main advantage of this approach is that with a single MLOP execution on the  $\{\widehat{p_j} = (p_j,f(p_j))\}$ data in $\mathbb{R}^{n+s}$ we produce a noise-free set $\widehat{Q} \subset \mathbb{R}^{n+s}$, which in fact consist of a noise-free set $Q$, that reconstructs the manifold $\mathcal{M}$, and an estimate of the clean value of the function evaluated at these points, $\widetilde f(Q)$.

\forceindent In the second step, we address the problem of evaluating the function at a new point $z \in \mathcal{M}$. The outcome of the first step   is a set of points which is not only noise-free, but also quasi-uniformly distributed on the manifold $\widehat{\mathcal{M}}$. This key idea paves the way towards estimating the value of the function at a new given point on $\mathcal{M}$, or near $\mathcal{M}$, with a good order of approximation. Our solution is based on utilizing the RBF approximation, as defined in \eqref{RBF}, while setting the centers at the cleaned points $Q$ and appending the corresponding cleaned function values $\widetilde f(Q)$. These steps are summarized in Algorithm \ref{alg:Alg3_}:

\begin{algorithm} [H]
	\caption{Function Approximation on a Manifold in High Dimensions}
	\vspace{1mm}
	\label{alg:Alg3_}
	\begin{algorithmic}[1]
		\State {\bfseries Input:} $P=\{p_j\}_{j=1}^J \subset \mathbb{R}^n$, $\{f(p_j)\}$, $\{z_k\}_{k=1}^K$ \hspace{35pt} $\triangleright$ where $\{z_k\}_{k=1}^K$ is a set of new points for function approximation
		\State{\bfseries Output:} $Q=\{q_i\}_{i=1}^I \subset \mathbb{R}^n$, $\widetilde f(Q)$, $\{\widetilde f(z_k)\}_{k=1}^K$
		\State Denoise the input data by running MLOP with $\widehat P = (P, f(P)) \subset \mathbb{R}^{n+s}$ $\to \widehat Q = (Q,  \widetilde f(Q))$
		\For {each $z_k \in \{z_k\}_{k=1}^K$ }
		\State Approximate $\widetilde f(z_k)$ via RBF, with centers set at $Q$ and the corresponding $\widetilde f(Q)$ values
		\EndFor
		
	\end{algorithmic}
\end{algorithm}
\vspace{-12mm}

\section{Theoretical Analysis of the Method}

In this section we discuss some of the theoretical aspects of the proposed approach to approximation of functions. Our analysis relies on the theory of the MLOP method as well as the RBF method. Specifically, we can use Theorems \ref{thm:conv}, \ref{thm:approx_order}, \ref{thm:convRate}, \ref{complex_MLOP} regarding the convergence of the MLOP algorithm to a stationary point, its rate of convergence, and its complexity, estimated for the problem at hand. We also build on the results about the order of approximation of the MLOP. Thus, we can state the following theorem on the order of approximation for the our approximation problem.

\subsection{Order of Approximation}

\begin{Theorem}[Order of approximation]\label{thm:approx_order_func_approx}
	Let $P=\{p_j\}_{j=1}^J$ be a set of points sampled from a $d$-dimensional $C^{2}$ manifold $\mathcal{M}$ without noise that satisfy the $h$-$\rho$ condition. Let $f: \mathcal{M} \to \mathbb{R}^s$ be a smooth multivariate function given at points  of $P$. Suppose that $f \in W_{\infty}^k(\Omega)$ and fulfills all the conditions of Theorem \textup{\ref{thm:approx_order_RBF}}. Then:
	\begin{enumerate}[label=\normalfont(\roman*)]
		\item For fixed $\rho$ and $\delta$, there is a set of points ${Q}=\{{q}_i\}_{i=1}^I \subset \mathbb{R}^{n}$ which approximates $\mathcal{M}$ with order $O(\widehat h^{2})$, where $\widehat h = \max\{\widehat h_1, \widehat h_2\}$ and $\widehat h_1$ and $\widehat h_2$ are as in Definition \textup{\ref{def:def2}} with respect to the high-dimensional data $\widehat P = \{(p_j, f(p_j))\}  \subset \mathbb{R}^{n+s}$. Moreover, the order of approximation of $f$ on the set $Q$ is also $O(\widehat h^{2})$.
		\item The order of approximation of $f$ at a new point is less than $C_1 \widehat h^2 +C_2 h_2^k$ when using the \textup{RBF} approximation with centers at $Q$, and $\widetilde f(Q)$, where $h_2$ is the optimal support of $\widehat w_{i,i'}$ as   in Definition \textup{\ref{def:def2}}, and $C_1$ and $C_2$ are constants.
	\end{enumerate}	
	
\end{Theorem}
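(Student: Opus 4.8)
The plan is to handle the two parts separately: part (i) reduces to the already-established MLOP order-of-approximation result (Theorem~\ref{thm:approx_order}) applied to the lifted manifold $\widehat{\mathcal{M}} = \mathrm{Graph}\,f \subset \mathbb{R}^{n+s}$, while part (ii) follows from a triangle-inequality split of the new-point error into a pure RBF interpolation error and a propagated denoising error.

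For part (i), I would first verify that the lifted data $\widehat P = \{(p_j, f(p_j))\}$ satisfies the $h$-$\rho$ condition of Definition~\ref{def:def4} with respect to $\widehat{\mathcal{M}}$. Because $f$ is smooth and $\mathcal{M}$ is a compact $C^2$ manifold, the graph map $x \mapsto (x, f(x))$ is bi-Lipschitz, so both the fill distance and the density constant $\rho$ transfer from $(P,\mathcal{M})$ to $(\widehat P, \widehat{\mathcal{M}})$ up to constants depending only on the Lipschitz bounds of $f$; this is exactly where $\widehat h_1, \widehat h_2$ of Definition~\ref{def:def2}, now computed in $\mathbb{R}^{n+s}$, enter. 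Since $\widehat{\mathcal{M}}$ is itself a $d$-dimensional $C^2$ manifold, Theorem~\ref{thm:approx_order} applies verbatim and gives that the MLOP output $\widehat Q = (Q, \widetilde f(Q))$ approximates $\widehat{\mathcal{M}}$ with order $O(\widehat h^2)$, where $\widehat h = \max\{\widehat h_1, \widehat h_2\}$. The separate claims for $Q$ and for $\widetilde f(Q)$ then follow because the coordinate projections $\mathbb{R}^{n+s} \to \mathbb{R}^n$ and $\mathbb{R}^{n+s} \to \mathbb{R}^s$ are $1$-Lipschitz, so each component of the distance from $\widehat q_i$ to $\widehat{\mathcal{M}}$ is dominated by the full distance: in particular $\mathrm{dist}(q_i,\mathcal{M}) = O(\widehat h^2)$, and $\|\widetilde f(q_i) - f(q_i)\| = O(\widehat h^2)$ after projecting $q_i$ to its nearest base point on $\mathcal{M}$ and absorbing the $O(\widehat h^2)$ base-point shift through the Lipschitz continuity of $f$.

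For part (ii), fix a new point $z$ and let $\widetilde f$ denote the RBF approximant built on centers $Q$ with attached values $\widetilde f(Q)$. I would introduce the auxiliary interpolant $If$ obtained by the same RBF scheme but using the \emph{exact} values $f(q_i)$ at the centers, and split
\[
\|f(z) - \widetilde f(z)\| \le \|f(z) - (If)(z)\| + \|(If)(z) - \widetilde f(z)\|.
\]
The first term is the genuine RBF interpolation error on the center set $Q$; since the repulsion term $E_2(Q)$ in \eqref{eq:1} forces $Q$ to be quasi-uniform with fill distance of order $h_2$ (the support parameter of $\widehat w_{i,i'}$ governs the $Q$-$Q$ spacing), Theorem~\ref{thm:approx_order_RBF} bounds it by $o(h_2^k) \le C_2 h_2^k$ for $f \in W_\infty^k(\Omega)$. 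The second term is linear in the nodal data, so representing the interpolant through its cardinal basis $\{L_i\}$ gives $(If)(z) - \widetilde f(z) = \sum_i \big(f(q_i) - \widetilde f(q_i)\big) L_i(z)$, whence
\[
\|(If)(z) - \widetilde f(z)\| \le \Big(\max_i \|f(q_i) - \widetilde f(q_i)\|\Big)\sum_i |L_i(z)| \le C_1 \widehat h^2,
\]
using the $O(\widehat h^2)$ value error from part (i) together with a uniform bound on the Lebesgue constant $\sum_i |L_i(z)|$ for quasi-uniform centers. Adding the two estimates yields the claimed bound $C_1 \widehat h^2 + C_2 h_2^k$.

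The main obstacle, I expect, is the stability step in the second term: controlling the propagated data error requires a uniform bound on the RBF Lebesgue constant for quasi-uniform centers, which is a nontrivial (though known) fact depending on the choice of $\phi$ and on the quasi-uniformity constant $\eta$ of Definition~\ref{def:def12}. I would need to confirm that the Gaussian-type bases used here fall under such a bound and that the constant stays uniform as $\widehat h \to 0$. A secondary technical point is the transfer of the $h$-$\rho$ and quasi-uniformity properties through the lifting and through the MLOP iteration: one must argue that the \emph{output} fill distance of $Q$ is genuinely comparable to $h_2$, not merely bounded by it, so that the RBF rate is stated in the correct scale.
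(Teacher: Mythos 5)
Your proposal follows the same overall route as the paper: lift the data to the graph $\widehat{\mathcal{M}}=\mathrm{Graph}\,f\subset\mathbb{R}^{n+s}$, invoke Theorem~\ref{thm:approx_order} for the MLOP output $\widehat Q=(Q,\widetilde f(Q))$ to get part (i), and then combine the denoising error with the RBF error for part (ii). The difference is one of rigor rather than strategy, and it is worth noting. For part (i) you explicitly check that the $h$-$\rho$ condition transfers to $\widehat P$ via the bi-Lipschitz graph map and that the coordinate projections are $1$-Lipschitz; the paper takes both for granted. More substantially, for part (ii) the paper simply asserts that ``the overall order of approximation for a new point is a combination of the two orders, namely $\le C_1\widehat h^2+C_2h_2^k$,'' with no argument for how the $O(\widehat h^2)$ perturbation of the nodal values propagates through the RBF interpolant. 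Your triangle-inequality split into an exact-data interpolant $If$ plus a data-error term, with the latter controlled by the Lebesgue constant $\sum_i|L_i(z)|$ of the cardinal basis, is precisely the missing step, and your own caveat is the right one: a uniform bound on that Lebesgue constant for quasi-uniform Gaussian centers (and uniform as $\widehat h\to 0$) is nontrivial and is nowhere established in the paper, so the constant $C_1$ in the theorem implicitly depends on it. Likewise, your observation that one must show the \emph{output} fill distance of $Q$ is genuinely of order $h_2$ (so that Theorem~\ref{thm:approx_order_RBF} applies at the claimed scale) identifies a second gap that the paper's proof does not close. In short, your proposal is the paper's argument done properly, and the two technical points you flag are real obligations rather than formalities.
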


\begin{proof}

	Given the input data, which consists of the points $\{p_j\}_{j=1}^J$ and the function values $\{f(p_j)\}_{j=1}^J$, we define a new point-set, which is the graph of the function $f$, $\widehat{P} = \{\widehat{p}_j\}_{j=1}^J$ sampled from a new manifold $\widehat{\mathcal{M}}  \in \mathbb{R}^{n+s}$, where $\widehat{p_j}$ are defined as the pairing of  the $p_j$-data with the corresponding values $f(p_j)$. In this setting, we use the MLOP algorithm to denoise a $d$-dimensional manifold embedded in $(n+s)$-dimension. Thus, by applying the MLOP method to the new data we obtain the set $\widehat Q$ of points that reconstruct $\widehat{\mathcal{M}}$. From Theorem \ref{thm:approx_order} it follows that $\widehat Q$ approximates $\widehat{\mathcal{M}}$ with the order $O(\widehat h^{2})$, where $\widehat h$ is the representative distance introduced in Definition  \ref{def:def2} for the extended data set $\widehat P = \{(p_j, f(p_j))\}  \in \mathbb{R}^{n+s}$. 
	Recall that $\widehat{Q}$ is in fact a combination of a noise-free set $Q$, which reconstructs the manifold $\mathcal{M}$, and an estimate of the clean values of the function at these points in $Q$, $\widetilde f(Q)$. Therefore, the order of approximation of $Q$ to $\mathcal{M}$, and of the estimated values $\widehat f$ to $f$ is $O(\widehat h^{2})$.

	\forceindent Subsequently, we use an RBF approximation with theoretical order of approximation $O(h^k)$ (as stated in Theorem \ref{thm:approx_order_RBF}). In our case the relevant $h$ is $h_2$, the support size  of $\widehat w_{i,i'}$ as  in Definition \ref{def:def2}. Therefore, the overall order of approximation for a new point is a combination of the two orders, namely $\le C_1 \widehat h^2 +C_2h_2^k$, with $C_1$, and $C_2$ constants.
\end{proof}

\subsection{Complexity of the Approximation of Functions}
\begin{Theorem}
	\label{complex_MLOP_func_approx}
	Let $P=\{{p_j }\}_{j =1}^J$ be a set of points  sampled near a $d$-dimensional manifold $\mathcal{M} \subset \mathbb{R}^n$ and let $Q=\{q_i \}_{i=1}^I$ be a set of points which will provide the desired noise-free manifold reconstruction. Let $f: \mathcal{M} \to \mathbb{R}^s$ be a multivariate function given at the points of $P$. Then the complexity of the approximation of $f$ via the \textup{MLOP} algorithm for the denoising step is $O((n+s)mJ + k I((n+s)m\widehat{I}+\widehat{J}) + nmI + I\, \textup{log}_2^2 s)$, and for evaluating $f$ at a new point is $O(nm+I)$, where the number of iterations $k$ is bounded as in Theorem \textup{\ref{thm:convRate}}, $m$, with $m\ll n$, is the smaller dimension to which we reduce the dimension of the data, and $\widehat{I}$ and $\widehat{J}$ are the numbers of $Q$-points and $P$-points, respectively,  in the support of the weight function $\hat w_{i,i'}$ and $w_{i,j}$.
\end{Theorem}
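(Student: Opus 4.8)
The plan is to treat this as a careful accounting argument that reduces to the MLOP complexity bound of Theorem~\ref{complex_MLOP}, suitably modified for the embedding in $\mathbb{R}^{n+s}$, and then layers on the two cost contributions that are peculiar to the function-approximation setting: the separate sketching of the $s$-dimensional codomain coordinates and the RBF evaluation at new points. First I would observe that Algorithm~\ref{alg:Alg3_} runs MLOP not on the raw data in $\mathbb{R}^n$ but on the graph data $\widehat P = \{(p_j, f(p_j))\} \subset \mathbb{R}^{n+s}$. Hence Theorem~\ref{complex_MLOP} applies with the ambient dimension $n$ replaced by $n+s$, which already accounts for the first two terms: the $(n+s)mJ$ summand is the one-time sketching of the $J$ input points by the random matrix $S$ of Remark~\ref{def:red_dim} (now of size $(n+s)\times m$), and the $kI((n+s)m\widehat I + \widehat J)$ summand is the per-iteration gradient-descent cost of \eqref{eq:GradG}, summed over the $k$ iterations bounded in Theorem~\ref{thm:convRate} and over the $I$ points, with $\widehat I$ and $\widehat J$ counting the neighbours that contribute to the repulsion and attraction terms.

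Next I would isolate the overhead that a blind substitution $n \mapsto n+s$ does not capture. After denoising, the output $\widehat Q$ must be split into its spatial part $Q \subset \mathbb{R}^n$ and its value part $\widetilde f(Q) \subset \mathbb{R}^s$, and the $I$ reconstruction points must be prepared to serve as RBF centers; projecting the $I$ spatial coordinates through $S$ costs $O(nmI)$, giving the third term. The fourth term $I\log_2^2 s$ is where the codomain dimension genuinely enters: instead of processing the $s$ value-coordinates with the dense projection (which would cost $O(smI)$ and reintroduce a linear dependence on $s$), I would handle them with a fast structured random sketch whose target dimension is logarithmic and whose per-point application cost is $O(\log_2^2 s)$, so that over the $I$ points the total contribution is $O(I\log_2^2 s)$. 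This is precisely the step that yields the advertised squared-logarithmic dependence on the codomain dimension.

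For the evaluation of $f$ at a new point $z$ via the RBF formula \eqref{RBF} with centers $Q$, the count is short: projecting $z$ through $S$ costs $O(nm)$, and forming the weighted sum over the $I$ centers — each contributing a reduced-dimension distance evaluation against the precomputed, sketched centers — costs $O(I)$, for a total of $O(nm+I)$. Assembling the denoising cost, the splitting/re-sketching overhead, and the codomain sketch gives the claimed bound for the first step, and the query cost gives the second.

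The main obstacle I expect is the justification of the $I\log_2^2 s$ term: one must argue that the $s$ codomain coordinates can be compressed to a sketch of merely logarithmic dimension while still preserving, with high probability, the distances that the MLOP ``norm'' $\|\cdot\|_{H_\epsilon}$ and the RBF weights actually consume. This rests on a Johnson--Lindenstrauss--type guarantee for the random sketching of Remark~\ref{def:red_dim}, applied to the value block, with \emph{both} the target dimension and the fast-transform application cost controlled so that the per-point effort is $O(\log_2^2 s)$ rather than linear in $s$; once that distortion-preservation statement is in hand, everything else is routine bookkeeping on top of Theorems~\ref{complex_MLOP} and \ref{thm:convRate}.
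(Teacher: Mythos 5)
Your accounting for the first two terms and for the query cost matches the paper exactly: Theorem~\ref{complex_MLOP} is invoked with $n$ replaced by $n+s$ for the denoising run on $\widehat P$, and the evaluation at a new point is $O(nm)$ for the sketch plus $O(I)$ for the weighted sum over centers. The $O(nmI)$ term is also essentially the same (the paper books it as the cost of evaluating the radial basis function at each of the $I$ centers in the reduced dimension, which is the same operation you describe as sketching the spatial coordinates of $Q$).

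The genuine problem is your explanation of the $I\log_2^2 s$ term. In the paper this term has nothing to do with compressing the codomain coordinates: it is the cost of the second half of the pre-processing, namely \emph{solving the least-squares problem for the RBF coefficients} $\lambda_i$ that enforce $\widetilde f(q_i)=f(q_i)$ at the $I$ centers, with the $O(I\log_2^2 s)$ bound taken from the cited fast least-squares solver. Your proposal omits this linear solve entirely, even though it is a necessary step of the RBF construction in \eqref{RBF} and is not free --- without it the $\lambda_i$ used in the query phase do not exist. In its place you introduce a Johnson--Lindenstrauss-type sketch of the $s$ value coordinates with a claimed per-point application cost of $O(\log_2^2 s)$; that step is not part of Algorithm~\ref{alg:Alg3_}, and the claimed cost is not achievable in any case, since merely reading an $s$-dimensional value vector already costs $\Omega(s)$ per point (fast structured transforms cost at least $O(s\log s)$, not $O(\log^2 s)$). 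So the fourth term in your budget is attached to a nonexistent step, and the step that actually generates it is missing. To repair the argument, drop the codomain-sketching discussion and instead account for the computation of the RBF coefficients by a least-squares solve over the $I$ centers and $s$ output components, which is where the $I\log_2^2 s$ contribution arises.
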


\begin{proof} 
	
	The estimate of the complexity of the algorithm can be separated into two steps: \textbf{pre-processing} and \textbf{evaluating the function at a new point}. The pre-processing step consists of applying the MLOP algorithm in $\R^{n+s}$, as well as finding the RBF coefficients $\lambda_i$ by solving the Least-Squares problem. By Theorem \ref{complex_MLOP}, the complexity of applying the MLOP in the higher dimension $n+s$ is $O((n+s)mJ + k I((n+s)m\widehat{I}+\widehat{J}))$. The output of this stage is a points set $Q$ of size $I$, and the corresponding set of values $\widetilde f(Q)$. From this point on, all the approximation operations are performed on $Q$, and if $I\ll J$ then we can increase sufficiently the efficiency. The next part of the pre-processing step is to evaluate the radial basis function $\phi$ for each $q_i \in Q$, which costs $O(nmI)$, and then to find the $\lambda_i$ by solving the 
	Least-Squares problem, which takes $O(I\, \textup{log}_2^2 s)$ (as shown in \cite{li1996new}). As a result, the complexity of the pre-processing step is $O((n+s)mJ + k I((n+s)m\widehat{I}+\widehat{J}) + nmI + I \,\textup{log}_2^2 s)$. 
	
	\forceindent Finally, using the $\lambda_i$ already found, we evaluate the function at a new point in time $O(nm+I)$. It should be stressed that although the pre-processing steps are cost-effective, they are executed once before the function is approximated at a new points set. Thus, if the number of new points for which the approximation needs to be found is large, then the pre-processing steps have less effect on the runtime.
\end{proof}
%In what follows we illustrate the effect of 

%Least squares complexity summary
%---------
%n, d = J
%J>> n
%100*1000*7 + 1000^3
%100^2 * 1000
%
%LS
%O(n^2m) - the default 
%2mn^2, n- dim of X_i, m - how many equations there are 
% O(nd log(d) + d^3) log(1/\epsilon) via sketching \cite{pilanci2016iterative}, where d=m
% O(n log_2^2 m). \cite{li1996new}

%following {Local error estimates for radial basis function interpolation of scattered data, ZONG-MIN WU}
%$\phi(r) = r^s$, the approximation error is O(h^{s/2})
%in our case s = 0.4
%thus we have O(h^0.2)

\section{Numerical Examples}

In what follows we present several numerical experiments to demonstrate the advantages of our methodology. We can point out two strengths of the proposed approximation approach. On the one hand, denoising the data domain as well as the function codomain plays an important role in the approximation of functions. On the other hand, sampling the manifold quasi-uniformly  improves significantly the approximation of functions by means of classical approximation methods on new data.

\forceindent Given data sampled from a manifold with noise, and the noisy values of a function $f$ at these points, we follow the function approximation procedure described above. Specifically, we define a new problem in $\R^{n+s}$, and apply $k$ MLOP iterations  to clean the newly defined manifold, which results in a new point-set $Q^{(k)}$, and the corresponding cleaned value set $\widetilde f(Q^{(k)})$. Next, we randomly select 100 points, $\{z_{i}\}_{i=1}^{100}$, from a clean reference dataset, and estimate the values of the  function $f$ at these points using both the RBF approximation, where the centers of the RBF function are taken at the  points of $Q^{(k)}$  (with the radial basis function set to either $\phi_1$, $\phi_2$, or $\phi_3$, as defined below  equation \eqref{RBF}), and the locally weighted average approximation (defined by formula \ref{locally_approx}). As a result, the approximation at the new points relies on the clean quasi-uniformly distributed $Q^{(k)}$ points, as well as on the clean values $\widetilde f(Q^{(k)})$. In all the stages above we evaluate the accuracy of the approximation as the relative maximum error of the $L_1$ norm of the difference between the value of $\widetilde f$ at the new point and the value of $f$ at the closest point in  the reference dataset, as well as the root-mean-square  error and the standard deviation.

\forceindent We start with two examples of functions, one smooth and the other non-smooth, both on a one-dimensional manifold embedded in  a high-dimensional space. Although in principle the approximation requires a smooth function, it can still be applied to a non-smooth function, provided that we end up with a smoothed result. Specifically, we consider the case of the manifold $O(2)$ of orthogonal matrices, embedded in  a 60-dimensional linear space by using the parameterization 
\begin{equation}
\label{eq:O2_eq} p=[\cos (\theta), -\sin(\theta), \sin(\theta), \cos(\theta), 0, \dotsc, 0]\,,\end{equation} 
where $\theta \in [-\pi, \pi]$. 
The input dataset $\widehat P$ was constructed by sampling 500 equally distributed points in the parameter space. Next, we randomly sampled an orthogonal matrix $A \in \R^{60 \times 60}$, and created a new point-set via the non-trivial vector embedding
\begin{equation} 
P = A \widehat P \,.
\end{equation}
Subsequently, we added a uniform noise $U(-0.1, 0.1)$, and initialized the set $Q$ by selecting $55$ points from $P$. Figure \ref{fig:func_approx_sin} (A) illustrates the first two coordinates of the points in our set (after multiplication by the matrix $A^{-1}$). The noisy sample points are shown in green, while the initial reconstruction points are shown in red. 

\forceindent We start by approximating the \textbf{smooth function} $f(x) = \frac{1}{4} (1+\sin(10\,\theta))$, where $\theta$ corresponds to the value used in the expression \eqref{eq:O2_eq} of $p$. Next, a uniform noise $U(-0.1, 0.1)$ was added in the codomain (see Figure \ref{fig:func_approx_sin} (C)), and then we applied the MLOP algorithm, which reconstructed the manifold (Figure \ref{fig:func_approx_sin} (B)), as well as  denoised function values (Figure \ref{fig:func_approx_sin} (D)). Table \ref{func_approx_errors_table} summarizes the errors that correspond to different scenarios. We first notice that due to the denoising effect on $Q$ points the maximum relative error decreased from $0.31$ to $0.13$ for noisy data $Q^{(0)}$ as opposed to the clean data $Q^{(150)}$. Next, we can also see the benefits of using the MLOP algorithm prior to  approximating  the function with the RBF method on the new data. In the present example the maximum relative error of the best RBF execution decreased dramatically from $0.66$ when running the RBF with centers at $Q^{(0)}$, to an $0.12$ when running on the quasi-uniform data, and with very low error variance. A quick comparison between the RBF method and the approximation via locally weighted average shows that the latter loses the battle to RBF (even though it produce better results on the clean data versus the noisy one).

% points in the support #P = 44, #Q = 15
% h1 = 0.18, h2 = 0.6720

\begin{figure}[H]
	\centering
	\captionsetup[subfloat]{farskip=0pt,captionskip=0pt, aboveskip=0pt}
	\subfloat[][]{ \includegraphics[width=0.5\textwidth]{./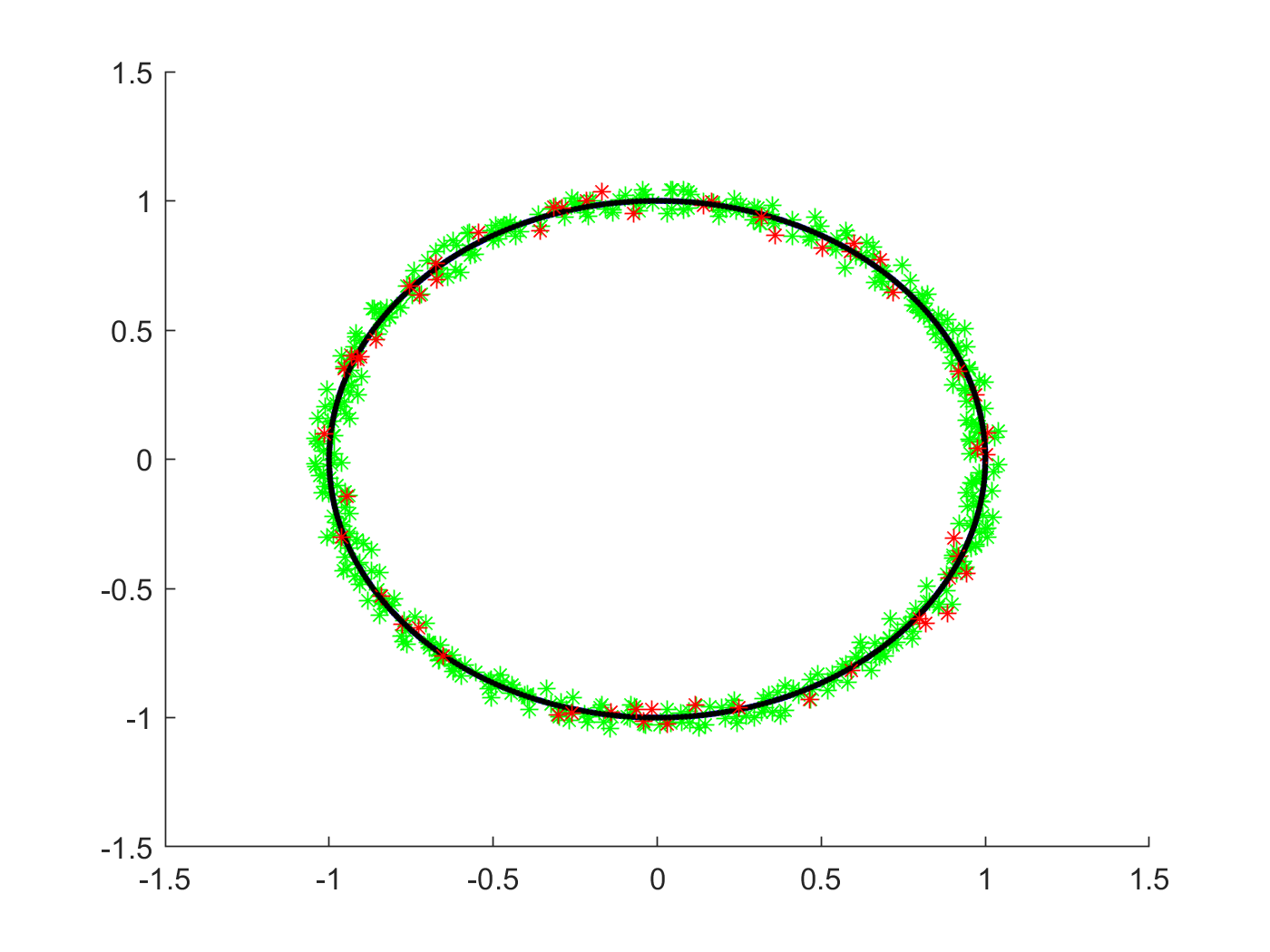} } \hspace{-2em}
	\subfloat[][]{ \includegraphics[width=0.5\textwidth]{./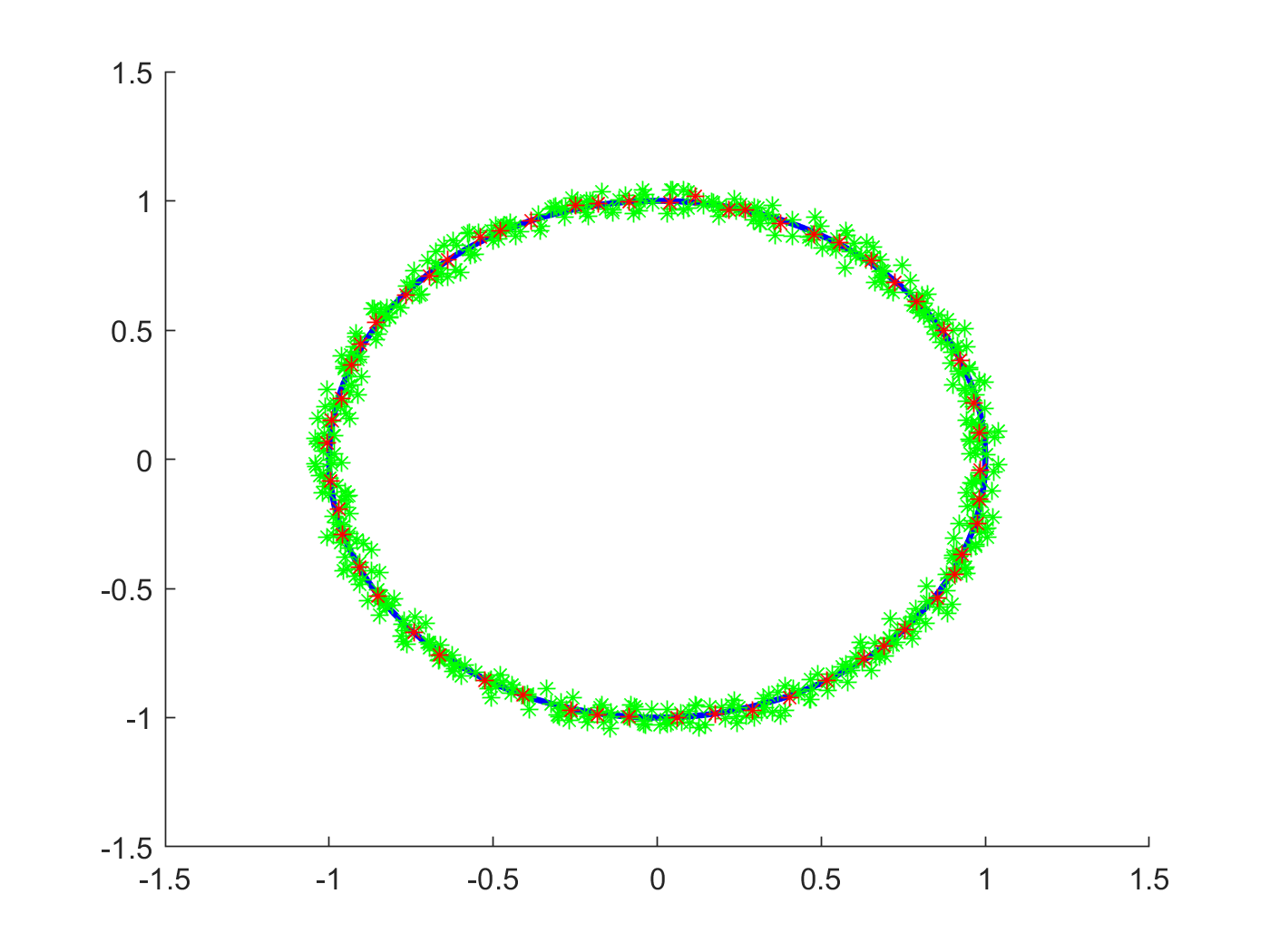} } \\[-0.7ex]
	\subfloat[][]{ \includegraphics[width=0.5\textwidth]{./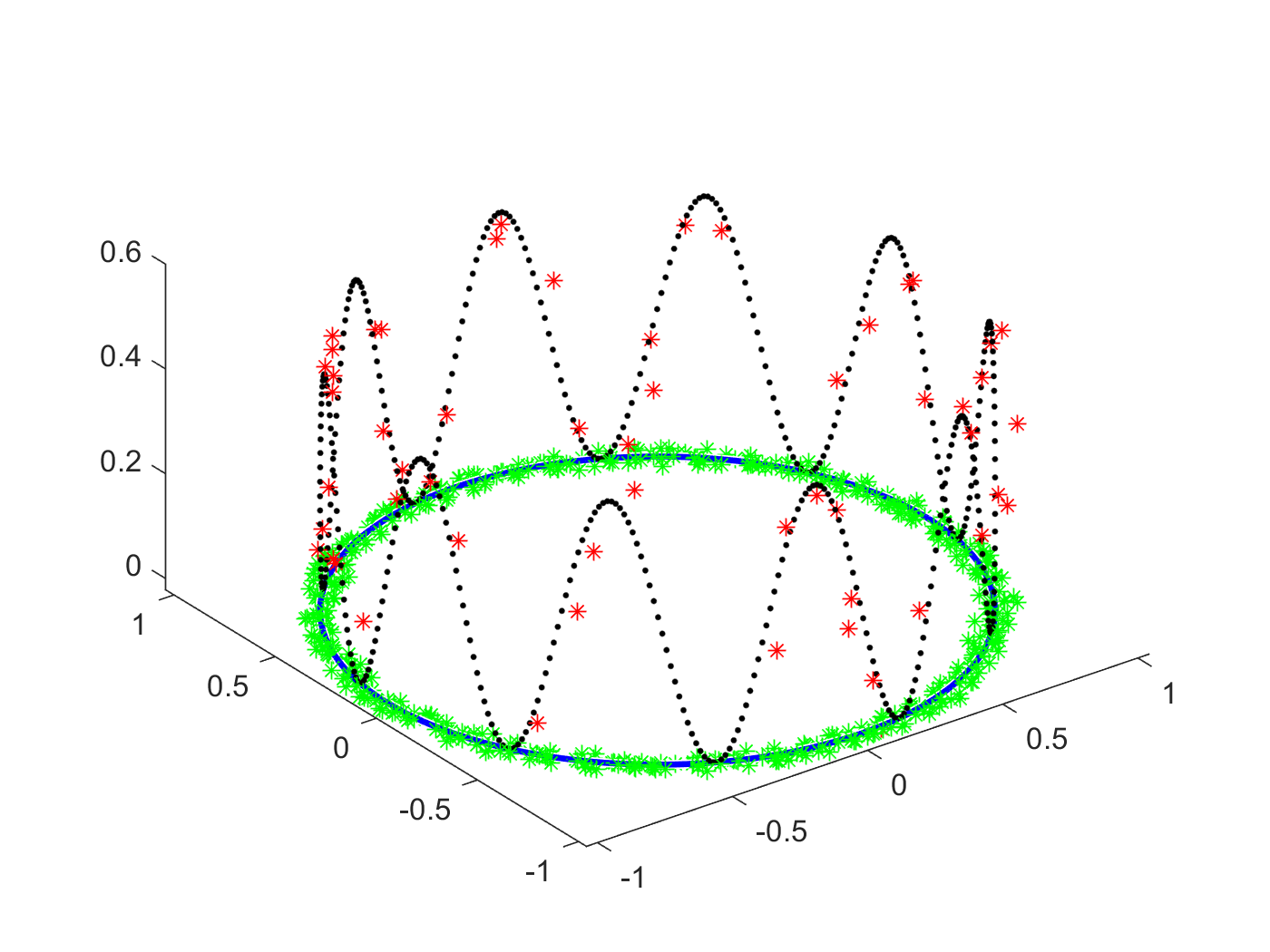} } \hspace{-2em}
	\subfloat[][]{\includegraphics[width=0.5\textwidth]{./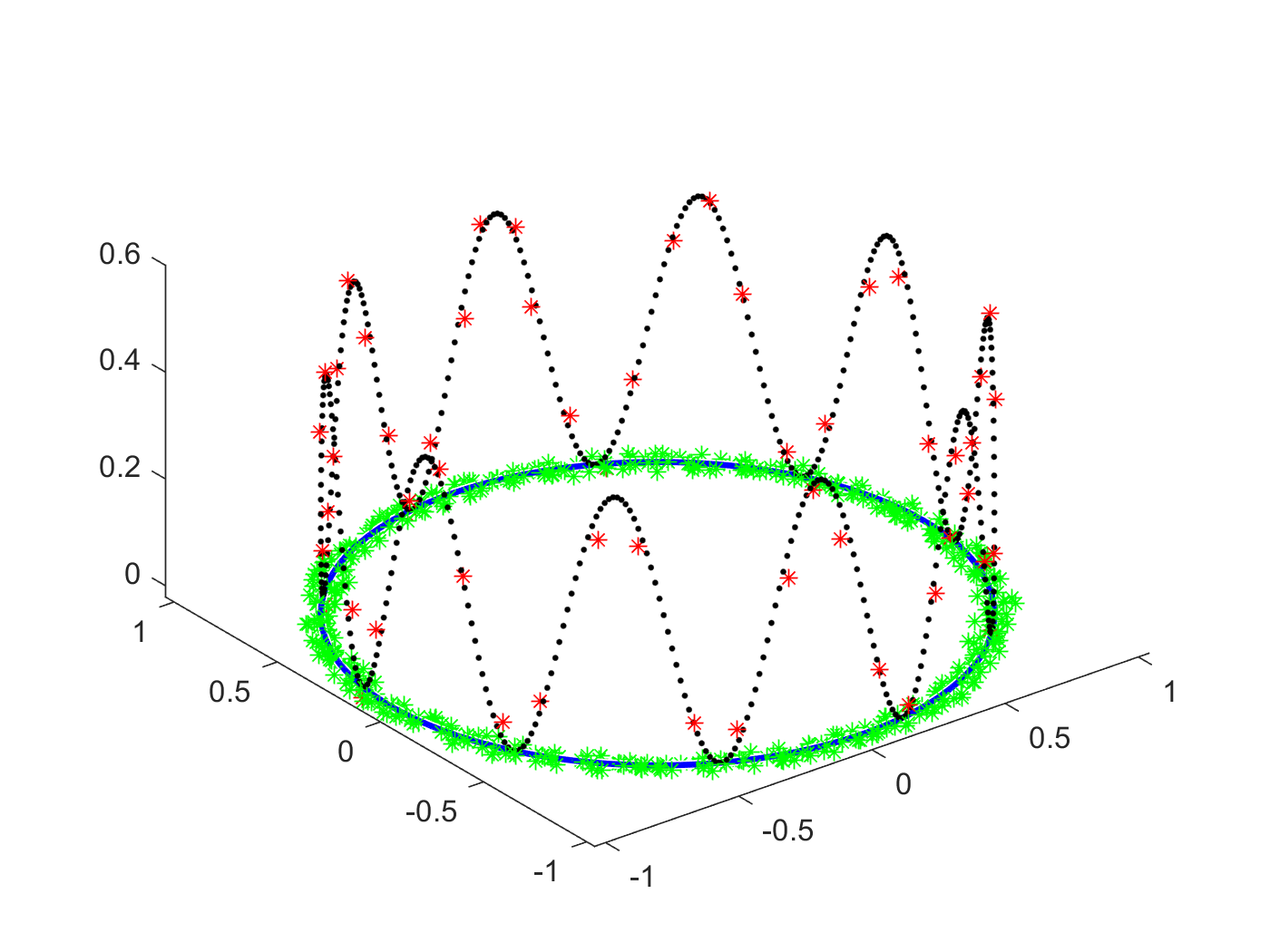}}
	\caption{Manifold of orthogonal matrices embedded in a 60-dimensional space. Shown are the first two coordinates of the point-set (after multiplication by $A^{-1}$). (A) Scattered data with uniformly distributed noise $U(-0.1; 0.1)$ (green), and the initial point-set $Q^{(0)}$ (red). (B) The resulting point-set of the MLOP algorithm after 150 iterations, $Q^{(150)}$ (red), overlaying the noisy samples (green). (C) The initial function values evaluated at the original point-set $Q^{(0)}$ with noise $U(-0.1, 0.1)$. The black line shows the noise-free reference data. (D) Smooth function approximation via the MLOP algorithm  at the data points $Q^{(150)}$.}
	\label{fig:func_approx_sin}
\end{figure}

\forceindent We then applied the approximation procedure to the \textbf{non-smooth} function given by $f(x) = \frac{1}{6}(1+\arccos(\cos(10\,\theta)))$. We evaluated the function at the $P$-points and added the uniform noise $U(-0.1, 0.1)$ (see Figure \ref{fig:func_approx_zigzag} (A)). Then, we applied the MLOP algorithm, which resulted in a reconstructed manifold, as well as denoised function values; see Figure \ref{fig:func_approx_zigzag} (B). As this figure shows,  %Figure \ref{fig:func_approx_zigzag} (B), 
the non-smooth function $f$ is approximated reliably. This is also reflected in the errors listed in Table \ref{func_approx_errors_table}, which shows that the approximation error decreased from $0.2$ to $0.1$  after the denoising procedure. The advantages of the MLOP approach are also demonstrated by the error decrease, for the best choice of radial basis function, from $0.77$ to $0.15$ for approximation on a new point-set. This shows the robustness of the approximation process with respect to the clean data. Here again, we see that the RBF method produces a better approximation then the weighted average.
\vspace{-12mm}
\begin{figure}[H]
	\centering
	\captionsetup[subfloat]{farskip=0pt,captionskip=0pt, aboveskip=0pt}
	\subfloat[][]{ \includegraphics[width=0.5\textwidth]{./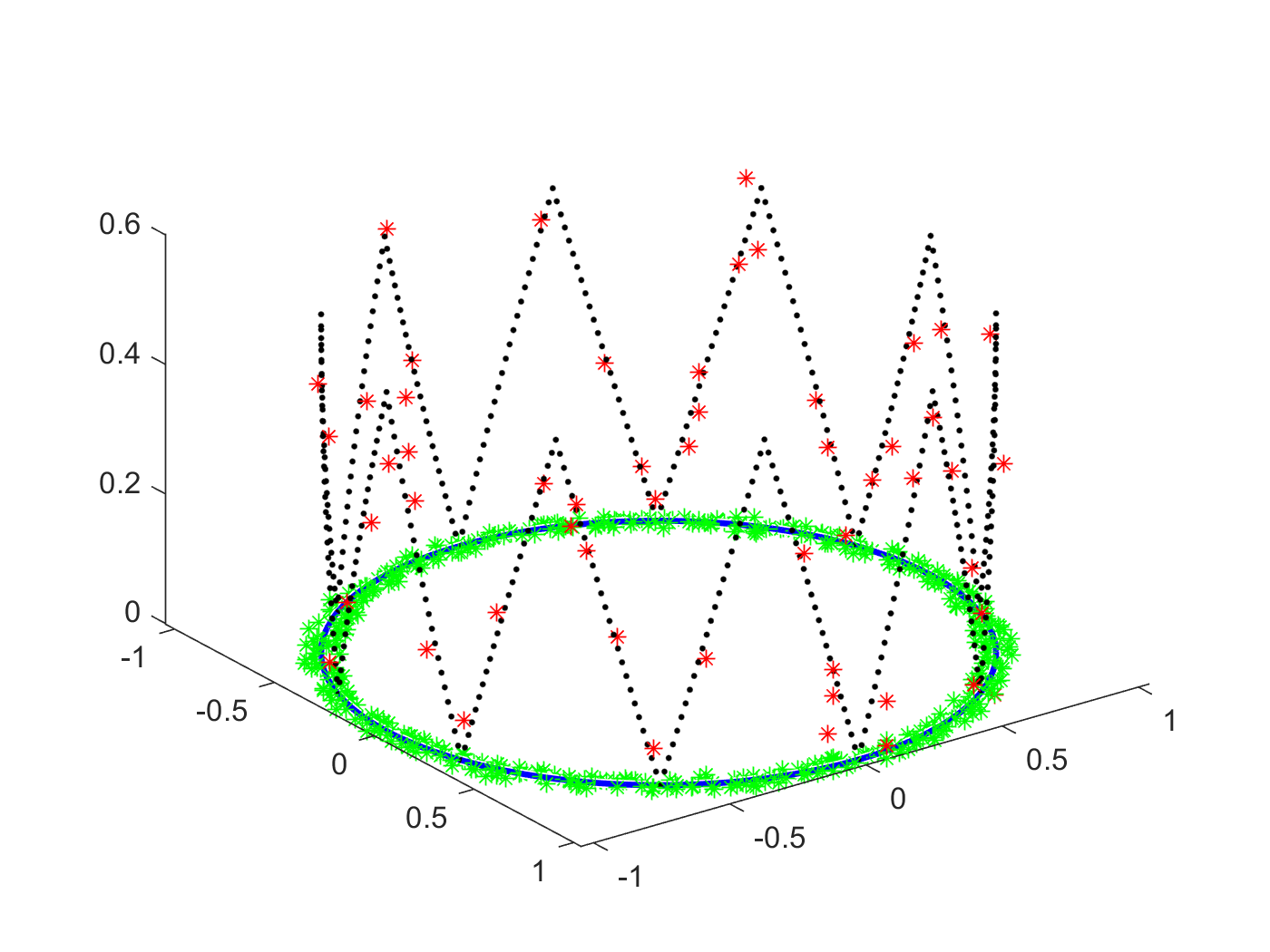} } \hspace{-4em}
	\subfloat[][]{\includegraphics[width=0.5\textwidth]{./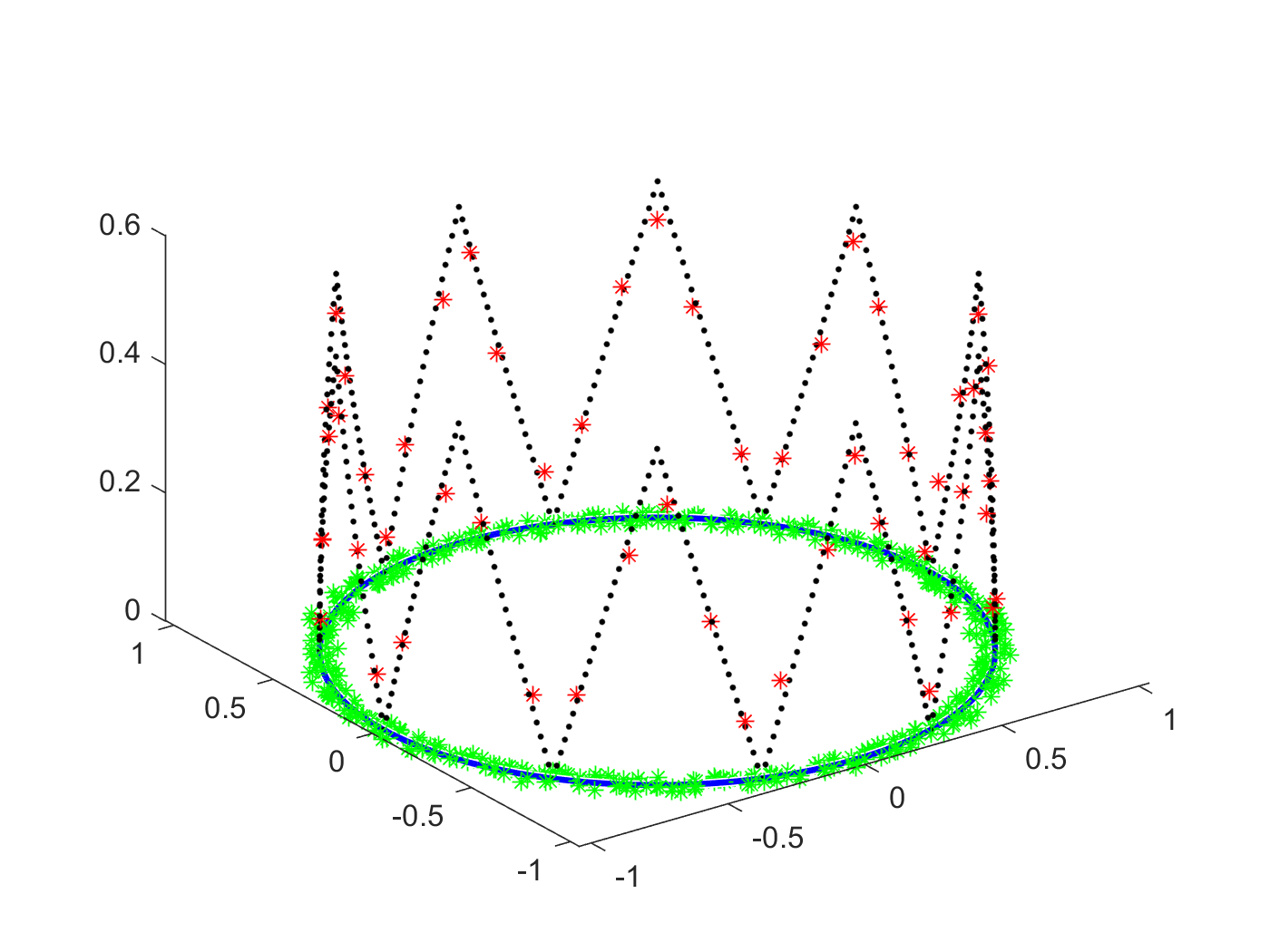}}
	\caption{Manifold of orthogonal matrices embedded in  a 60-dimensional space. Shown are the first two coordinates of the point-set (after  multiplication by $A^{-1}$). Scattered data with uniformly distributed noise $U(-0.1; 0.1)$ (green), and the $Q$ point-set (red). Left: The initial function values evaluated at the original $Q^{(0)}$-points with noise $U(-0.1, 0.1)$. The black line shows the noise-free reference data. Right: Approximation of our non-smooth function via MLOP at the data points $Q^{(150)}$.}
	\label{fig:func_approx_zigzag}
\end{figure}

\begin{table}[H]
	\linespread{1.1}\selectfont\centering
	\caption{Summary of the maximum and root mean squared errors with standard deviations errors of approximation of functions on the $O(2)$ manifold embedded into 60-dimensional space}
	\begin{tabular}{p{5cm}p{2.3cm}p{2.5cm}p{2.3cm}p{2.5cm}} %{llclc}
		\hline
		& \multicolumn{2}{c|}{$f(x) = \frac{1}{4}(1+\sin 10 x)$}               & \multicolumn{2}{c}{$f(x) = \frac{1}{6}(1+\arccos(\cos 10 x))$}      \\ \cline{2-5} 
		& Max relative error & \multicolumn{1}{c|}{RMSE $\pm$ var}  & Max relative error & RMSE $\pm$ var                      \\ \hline
		\textbf{Error over $Q^{(k)}$}                 & \textbf{}                     & \multicolumn{1}{l}{\textbf{}}        & \textbf{}                     & \multicolumn{1}{l}{\textbf{}}       \\ \hline
		$f(Q^{(0)})$                                                   & 0.31                          & $0.06 \pm 0.0011$ & 0.2                          & $0.04 \pm 0.0007$                     \\
		$f(Q^{(150)}) $                                              & 0.13                         & $0.03 \pm 0.0003$ & 0.1                         & $0.03 \pm 0.0002$                     \\ \hline
		\multicolumn{5}{l}{\textbf{Error over 100 new points}}                                                                                                                                   \\ \hline
		RBF, $\phi_1$, centers at $Q^{(0)}$, noisy $f$             & 0.66                          & $0.14 \pm 0.0079$ & 0.77                          & $0.16 \pm 0.010$                      \\
		RBF, $\phi_1$, centers at $Q^{(150)}$, cleaned $\widetilde f$ & 0.12                          & $0.03 \pm 0.0002$ & 0.15                         & $0.03 \pm 0.0004$                     \\
		RBF, $\phi_2$, centers at $Q^{(150)}$, cleaned $\widetilde f$ & 0.12                          & $0.03 \pm 0.0002$                      & 0.16                         & $0.03 \pm 0.0003$ \\
		RBF, $\phi_3$, centers at $Q^{(150)}$, cleaned $\widetilde f$ & 0.24                          & $0.05 \pm 0.0009$                      & 0.21                          & $0.05 \pm 0.0007$ \\
		Weighted average                                           & 0.28                          & $0.10 \pm 0.0018$ & 0.32                          & $0.09 \pm 0.0024$                     \\ \hline
	\end{tabular}
	\label{func_approx_errors_table}
\end{table}
\vspace{-12mm}

\forceindent In what follows we demonstrate our function approximation methodology on several examples of a low-dimensional manifold embedded in high-dimensional space. Specifically, we embedded a two-dimensional cylindrical structure and then a six-dimensional cylindrical structure in  $\R^{60}$. We start with the two-dimensional cylindrical structure. We sampled the structure using the parameterization
\begin{eqnarray*}
	p=t v_1+\frac{R}{\sqrt 2}\big(\cos(u)v_2+  \sin(u)v_3\big)\,,
\end{eqnarray*}
where $v_1=[1,1,1,1,1, \dotsc ,1]$, $v_2=[0,1,-1,0,0, \dotsc ,0], v_3=[1,0,0,-1,0, \dotsc ,0]$ 
$(v_1,v_2,v_3 \in \R^{60})$, $t \in [0,2]$ and $u \in [0.1 \pi,1.5 \pi]$. Using this representation, $800$ equally distributed (in parameter space) points were sampled with uniformly distributed noise (i.e., $U(-0.1, 0.1)$). We evaluated the function $f(t, u) = 1.3(1+\textup{sin}(0.5 u+1.5 t))$ at these points, and constructed the initial $Q$-set by randomly sampling $150$ points (see Figure \ref{fig:func_approx_cylinder} (A) for the $P$ and $Q$ data, and Figure \ref{fig:func_approx_cylinder} (C) for the values of the function at the 
$Q$-points). The representative distances of the $P$-set and $Q$-set were $h_1 = 0.19$ and $h_2 =0.27$, resepctively.  Next, we applied the MLOP algorithm on the new data of $(P, f(P))$, and extracted the sets $Q$  and $\widetilde f(Q)$ (see Figure \ref{fig:func_approx_cylinder} (B) for the cleaned $Q$-points, and Figure \ref{fig:func_approx_cylinder} (D) for the cleaned values at the 
$Q$-points). In addition, we approximated the values of $f$ at   100 new points, randomly selected from the reference data. The evaluation error results are summarized in Table \ref{3D_6Dcylinder}. The maximum relative $L_1$ error and the RMSE accompanied with the variance are summarized in Table \ref{3D_6Dcylinder}. It should be noted  that since we compare the maximum error to clean data, the relative error can exceed 1. One can see that the new data RBF with $\phi_2$ as well as $\phi_3$ achieve the lowest errors.

\vspace{-10mm}
\begin{figure}[H]
	\centering
	\captionsetup[subfloat]{farskip=0pt,captionskip=0pt, aboveskip=0pt}
	\subfloat[][]{ \includegraphics[width=0.5\textwidth]{./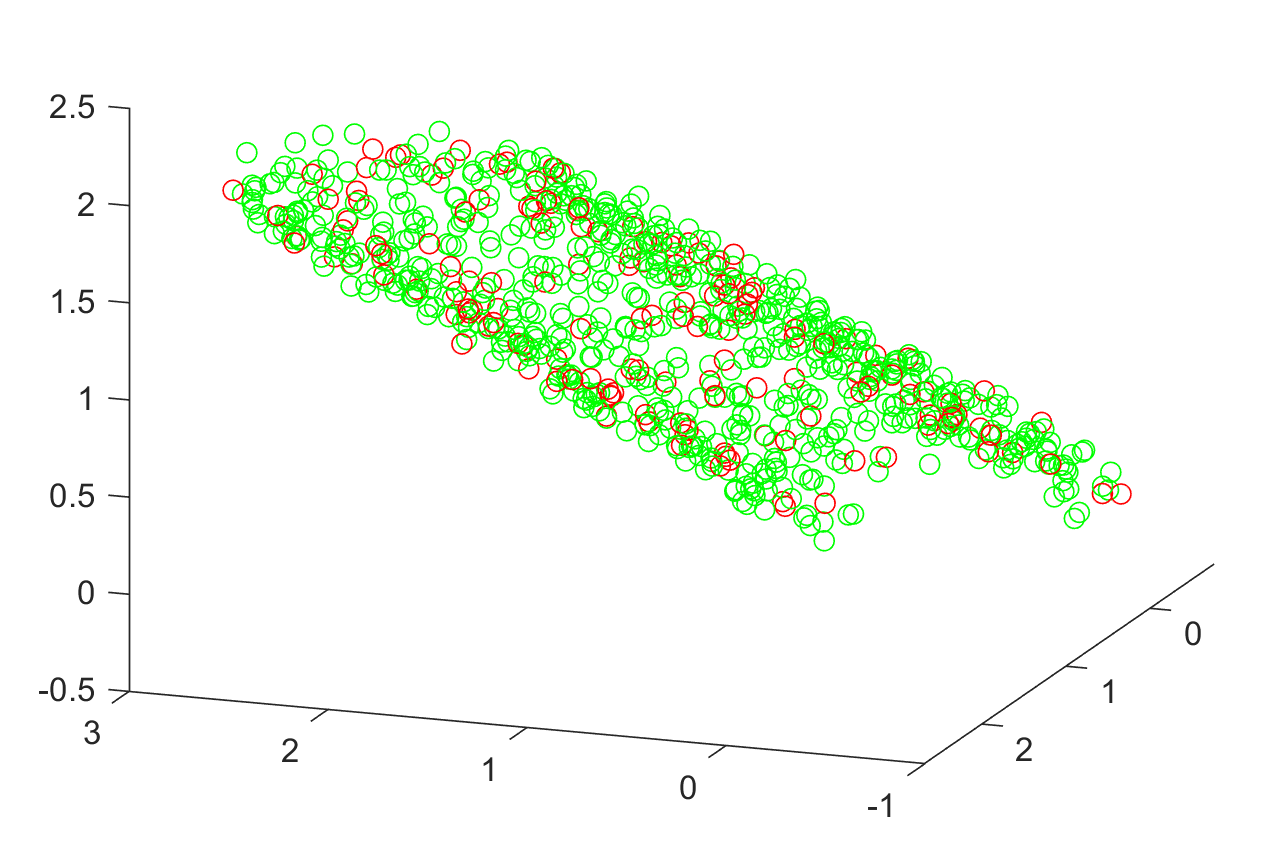} } \hspace{-2em}
	\subfloat[][]{ \includegraphics[width=0.5\textwidth]{./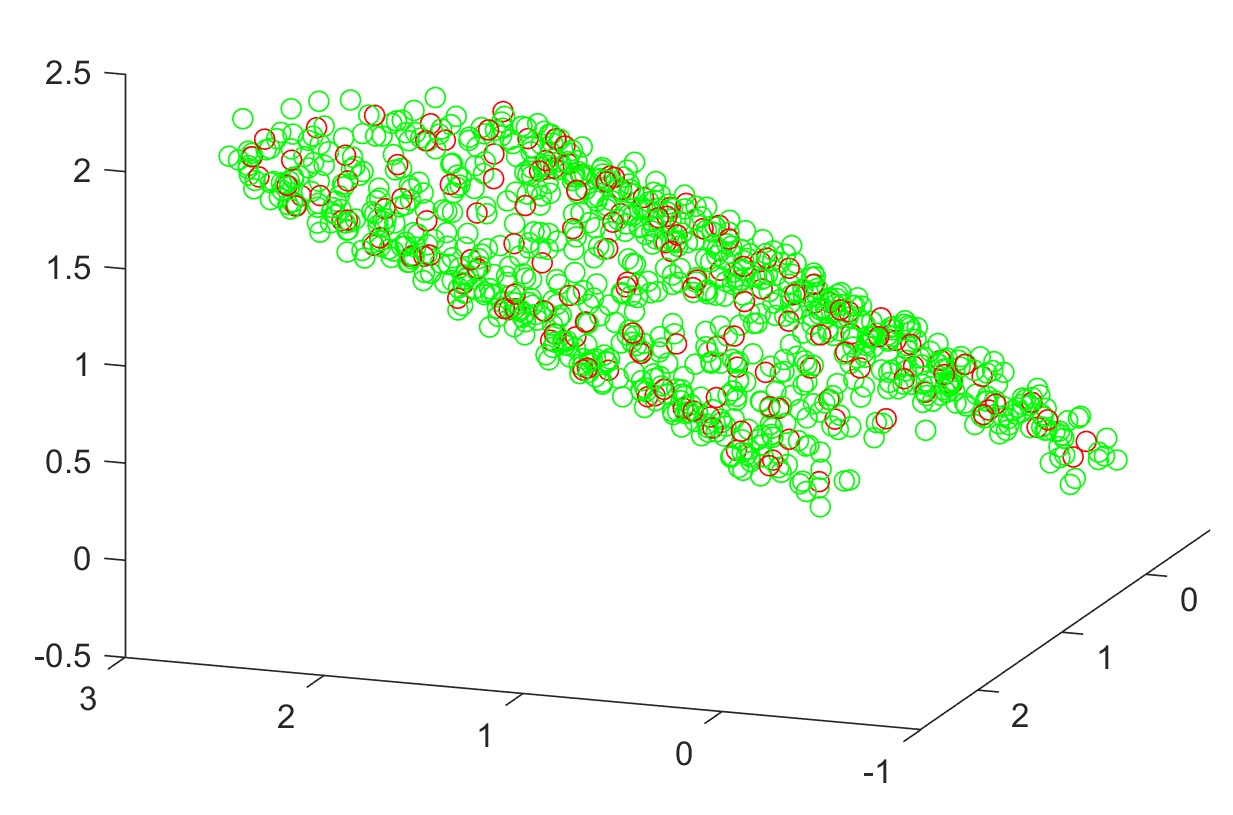} } \\[-0.7ex]
	\subfloat[][]{ \includegraphics[width=0.5\textwidth]{./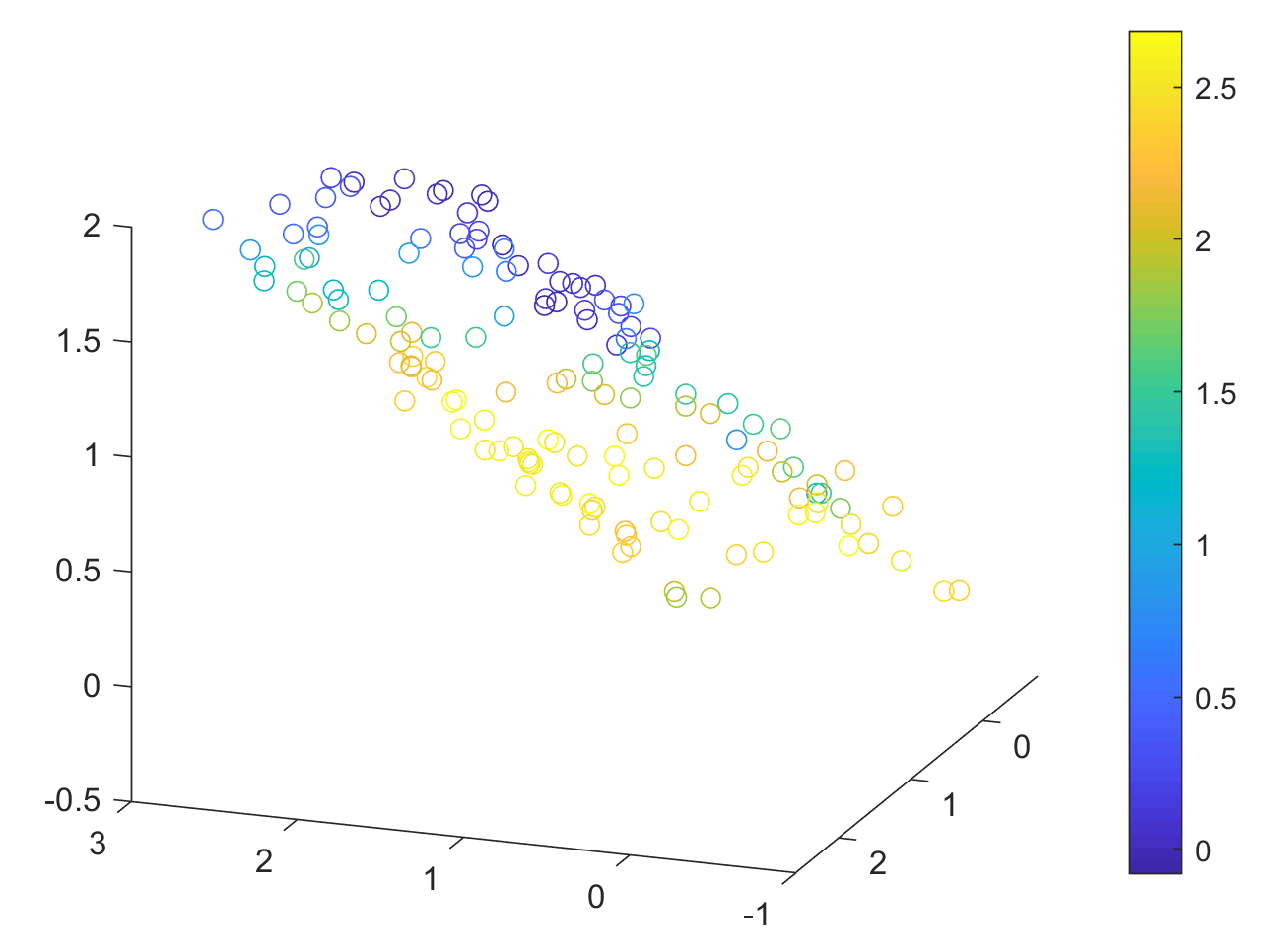} } \hspace{-2em}
	\subfloat[][]{\includegraphics[width=0.5\textwidth]{./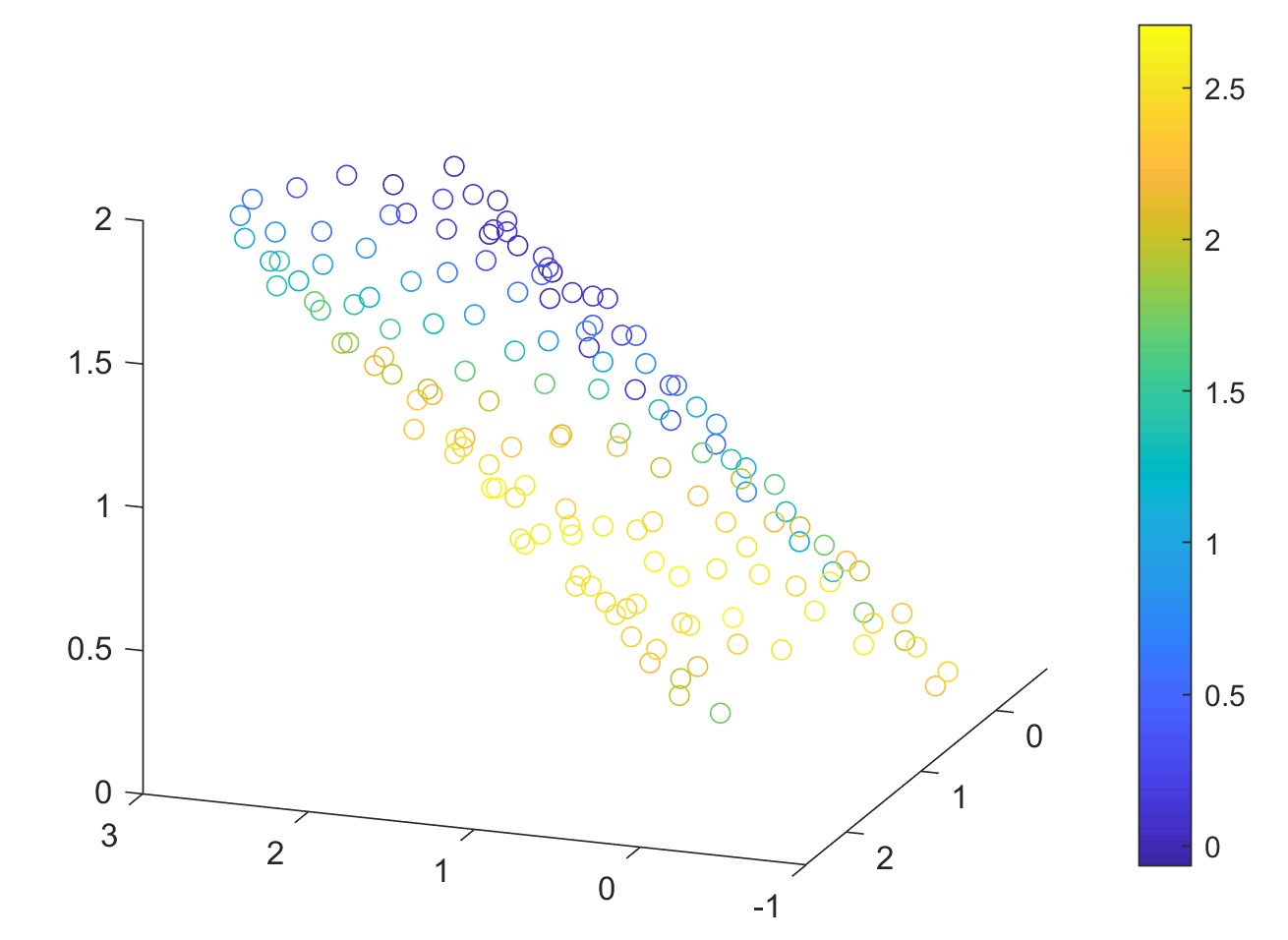}}
	\caption{Two-dimensional cylindrical structure embedded in  a 60-dimensional space. The first three coordinates of the point-set are shown. (A) Scattered data with uniformly distributed noise $U(-0.1; 0.1)$ (green), and the initial point-set $Q^{(0)}$ (red). (B) The resulting point-set of the MLOP algorithm after 200 iterations, $Q^{(300)}$ (red), overlaying the noisy samples (green). (C) The initial  values of the function at the original $Q^{(0)}$-points with noise $U(-0.1, 0.1)$. (D) MLOP approximation at the data points $Q^{(300)}$.}
	\label{fig:func_approx_cylinder}
\end{figure}

\subsection*{Six-dimensional cylindrical structure}

\label{sec:10dimCylinder_example_1}
Next, we tested our method on higher-dimensional manifolds  by utilizing an $n$-sphere to generate an $(n+1)$-dimensional cylinder (in the example of the two-dimensional cylinder, we used a circle  to generate the structure). Here, we utilized a five-dimensional sphere to build a six-dimensional manifold, using the parameterization
\begin{align*}
x_1 = R \cos(u_1) \,,\quad
x_2 = R \sin(u_1)\cos(u_2),\quad 
\ldots,\quad 
x_{6} = R \sin(u_1)\sin(u_2)\cdots\sin(u_5)\sin(u_6)\,. 
\end{align*}
We then embedded the sampled data in a 60-dimensional space by the parametrization
\begin{eqnarray}
p=t v_0 + R^2 [x_1,x_2, x_3, x_4, x_5, x_6, 0, \dotsc ,0]\,,
\end{eqnarray}
where $R = 1.5$, $t \in [0,2]$, $u_i \in [0.1 \pi, 0.6 \pi]$, and $v_0 \in \R^{60}$ is a vector with 1's in positions $1,...,d+1$ and 0 in the remaining positions. We randomly sampled the $P$-points from the six-dimensional cylindrical structure and embedded the sampled data in  a 60-dimensional space. This process resulted in 1200 points in the $P$-set. We evaluated the function $f(u_1, \ldots, u_6) = \sum\limits_{i=1}^6 {u_i}$ at these points, and constructed the initial $Q$-set by randomly sampling $460$ points. Next, uniformly distributed noise $U(-0.2; 0.2)$ was added to the points. To avoid trying to visualize a six-dimensional manifold, we plot here the cross-section of the cylindrical structure in three dimensions. In Figure \ref{fig:func_approx_7Dcylinder} (A) we present the $P$- and $Q$-data, and in Figure  \ref{fig:func_approx_7Dcylinder} (C) the  values of the function at the $Q$-points. The representative distances of the $P$-set and $Q$-set were $h_1 = 0.24$ and $h_2 =0.37$, respectively. We  then applied the MLOP algorithm on the new data of $(P, f(P))$, and extracted the sets $Q$ and $\widetilde f(Q)$ (see Figure \ref{fig:func_approx_7Dcylinder} (B) for the cleaned $Q$-points, and Figure \ref{fig:func_approx_7Dcylinder} (D) for the cleaned function values at the $Q$ points). In addition, we approximated the values of the function at 100 new  points, randomly selected from the reference data. The maximum relative $L_1$ error and the RMSE accompanied with the variance are summarized in Table \ref{3D_6Dcylinder}. One can see that the new data RBF with $\phi_2$ and the weighted average achieve lower errors.

\vspace{-10mm}

\begin{figure}[H]
	\centering
	\captionsetup[subfloat]{farskip=0pt,captionskip=0pt, aboveskip=0pt}
	\subfloat[][]{ \includegraphics[width=0.5\textwidth]{./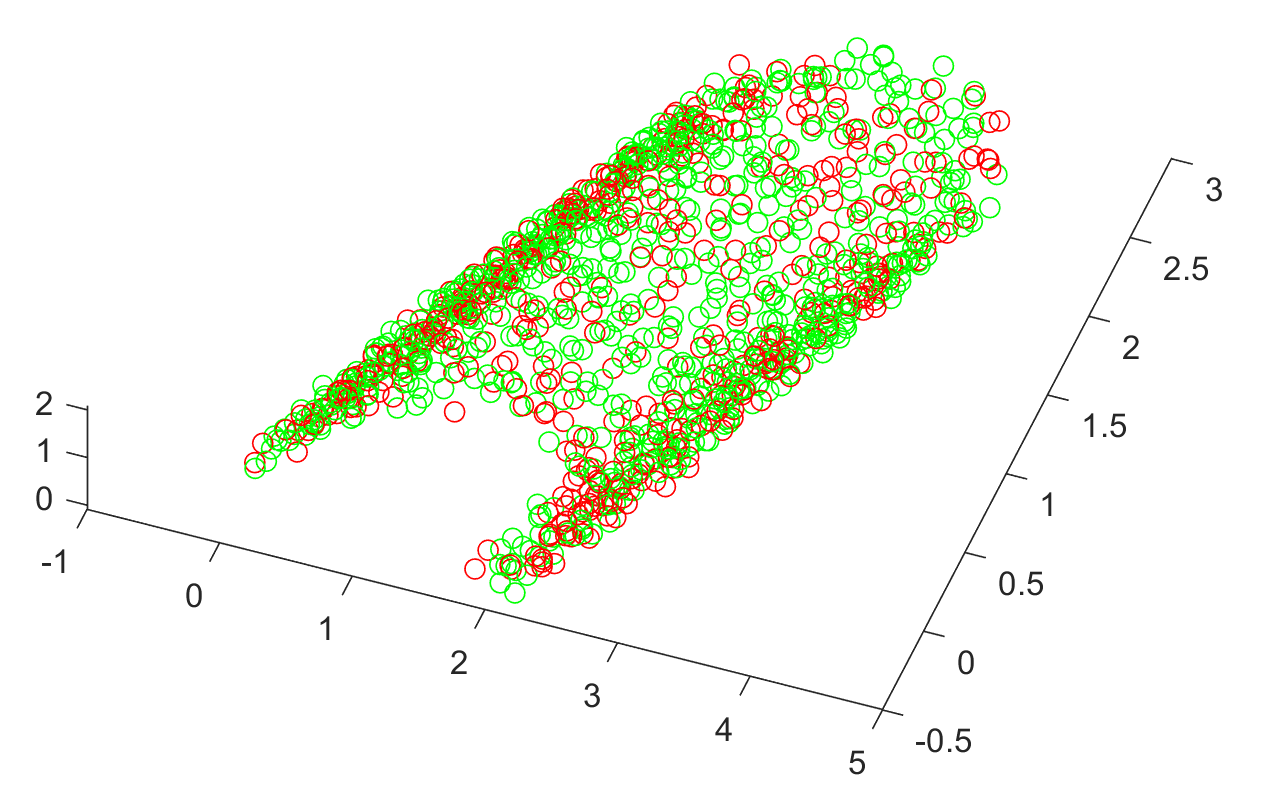} } \hspace{-2em}
	\subfloat[][]{ \includegraphics[width=0.5\textwidth]{./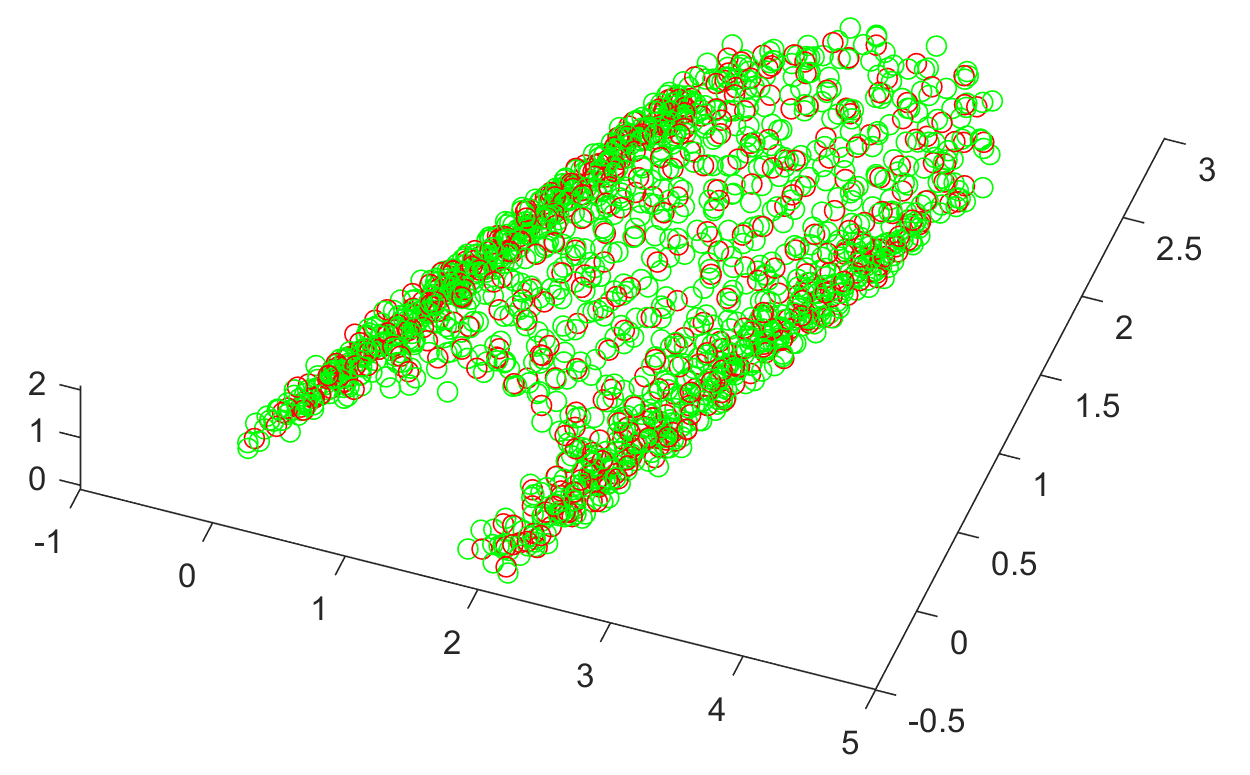} } \\[-0.7ex]
	\subfloat[][]{ \includegraphics[width=0.5\textwidth]{./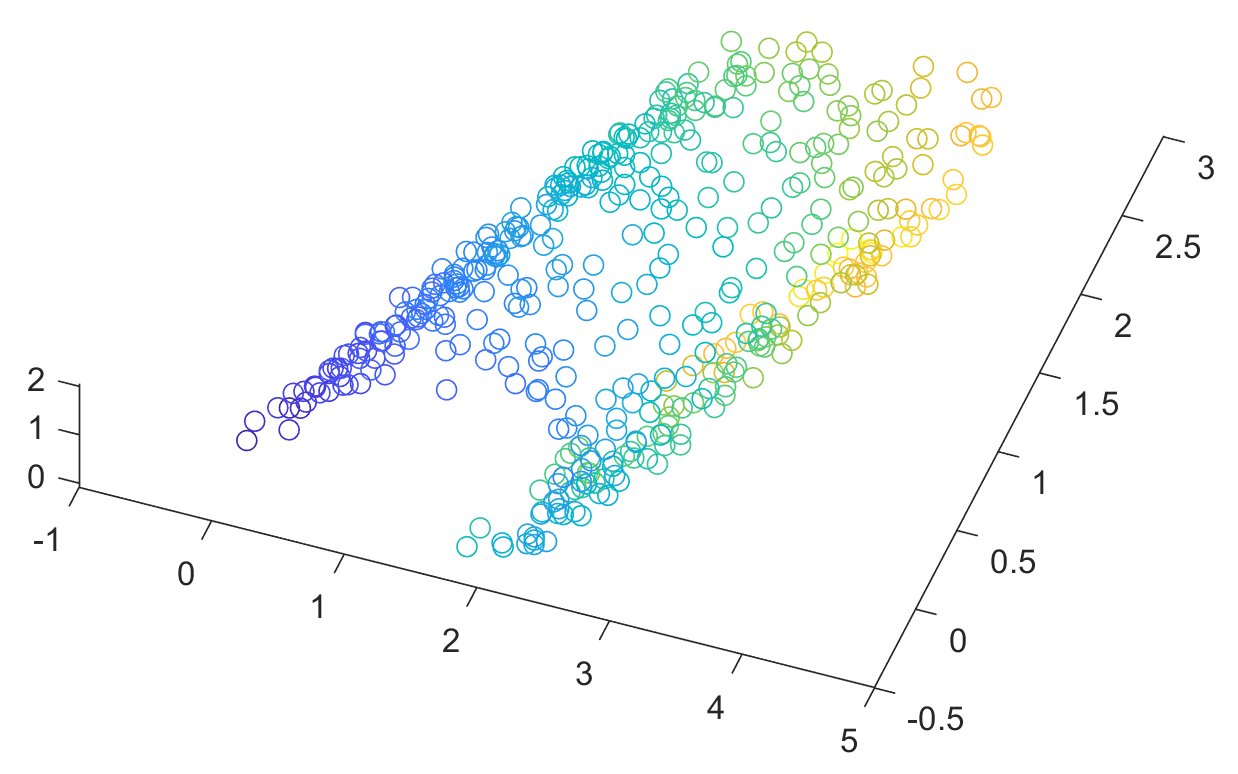} } \hspace{-2em}
	\subfloat[][]{\includegraphics[width=0.5\textwidth]{./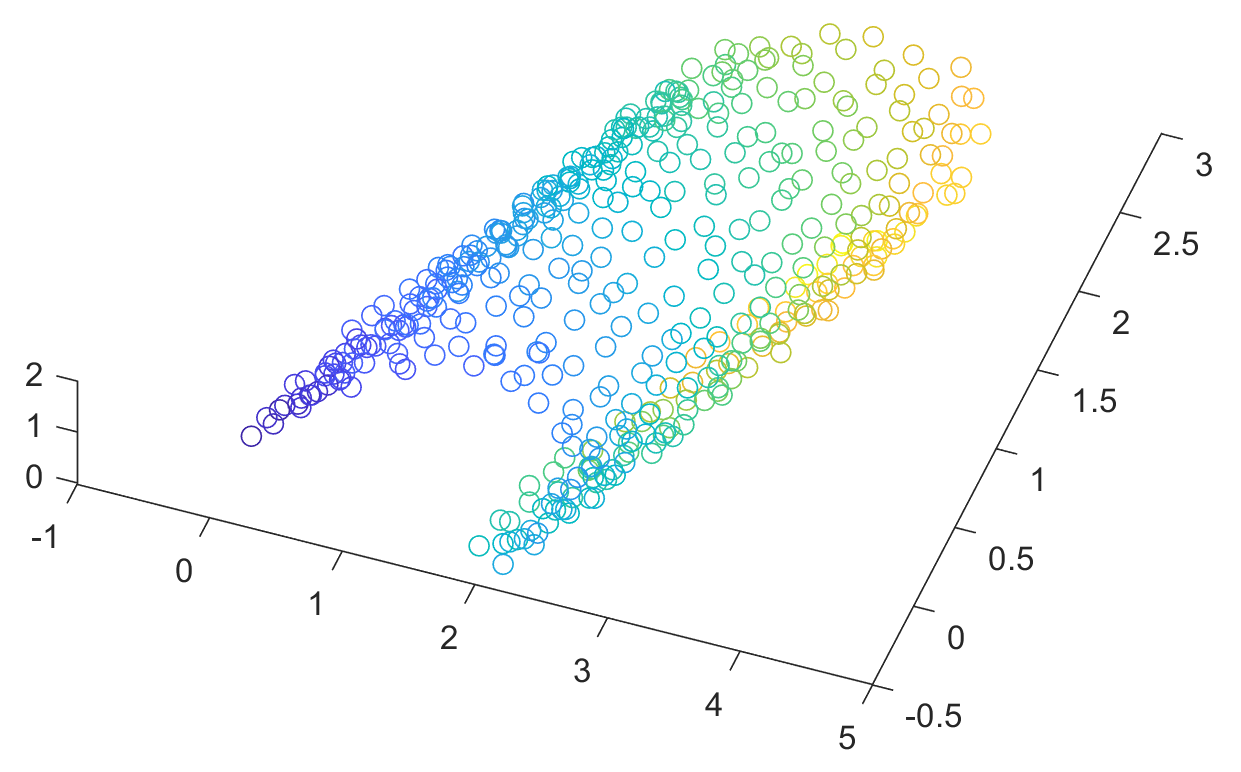}}
	\caption{Six-dimensional cylindrical structure embedded in a 60-dimensional space. Plot of the cross-section of the cylindrical structure in three dimensions. (A) Scattered data with uniformly distributed noise $U(-0.2; 0.2)$ (green), and the initial point-set $Q^{(0)}$ (red). (B) The resulting point-set of the MLOP algorithm after 300 iterations, $Q^{(300)}$ (red) overlaying the noisy samples (green). (C) The initial function values evaluated at the original $Q^{(0)}$ points with noise $U(-0.2, 0.2)$. (D) MLOP approximation at the data points $Q^{(300)}$.}
	\label{fig:func_approx_7Dcylinder}
\end{figure}

%\caption{Summary of the maximum and root mean squared error with standard deviations errors of function approximation on two and six-dimensional cylindrical structure manifold embedded into 60-dimensional space}
\vspace{6mm}
\begin{table}[H]
	\linespread{1.1}\selectfont\centering
	\caption{Summary of the maximum and root mean squared error with standard deviations errors of approximation of a function on a two-dimensional and six-dimensional cylindrical manifold embedded in a 60-dimensional space}
	\begin{tabular}{p{5cm}p{2.3cm}p{2.5cm}p{2.3cm}p{2.5cm}} %{llclc}
		\hline
		& \multicolumn{2}{c|}{2D cylinder in $\R^{60}$}                           & \multicolumn{2}{c}{6D cylinder in $\R^{60}$}                   \\ \cline{2-5} 
		& Max relative error & \multicolumn{1}{c|}{RMSE $\pm$ var}    & Max relative error & RMSE $\pm$ var                \\ \hline
		\textbf{Error over $Q^{(k)}$}                 & \textbf{}                     & \multicolumn{1}{l}{\textbf{}}          & \textbf{}                     & \multicolumn{1}{l}{\textbf{}} \\ \hline
		$f(Q^{(0)})$                                                   & 0.11                          & \multicolumn{1}{c|}{$0.09 \pm 0.0029$} & 0.066                         & $0.08 \pm 0.0018$             \\
		$f(Q^{(300)}) $                                              & 0.06                          & \multicolumn{1}{c|}{$0.05 \pm 0.0012$} & 0.054                         & $0.06 \pm 0.0012$             \\ \hline
		\multicolumn{5}{l}{\textbf{Error over 100 new points}}                                                                                                                               \\ \hline
		RBF with $\phi_1$, centers at $Q^{(0)}$, noisy $f$             & 1.37                          & \multicolumn{1}{c|}{$0.3 \pm 0.04$}    & 0.42                          & $0.4 \pm 0.063$               \\
		RBF with $\phi_1$, centers at $Q^{(300)}$, cleaned $\widetilde f$ & 0.38                          & \multicolumn{1}{c|}{$0.13 \pm 0.007$}  & 0.26                        & $0.26 \pm 0.026$              \\
		RBF with $\phi_2$, centers at $Q^{(300)}$, cleaned $\widetilde f$ & 0.11                          & \multicolumn{1}{c|}{$0.05 \pm 0.0009$} & 0.08                          & $0.07 \pm 0.0018$             \\
		RBF with $\phi_3$, centers at $Q^{(300)}$, cleaned $\widetilde f$ & 0.10                          & \multicolumn{1}{c|}{$0.04 \pm 0.0006$} & 0.13                          & $0.04 \pm 0.0007$             \\
		Weighted average                                           & 0.2                           & \multicolumn{1}{c|}{$0.1 \pm 0.0024$}  & 0.09                          & $0.06 \pm 0.0027$             \\ \hline
	\end{tabular}
	\label{3D_6Dcylinder}
\end{table}

\vspace{-12mm}

\subsection*{Robustness to Noise}

In the following example, we examine the effect of the noise level in the target domain on the quality of the approximation.
To do this numerically, we sampled a function over a Swiss Roll using the parameterization 
\begin{eqnarray*}
	p=\frac{1}{10} [x, y, z, 0, \dotsc, 0]\,,
\end{eqnarray*} 
where $x = t \sin (t)$, $y$ is a random number in the range $[-6, 6]$, and $z = t \cos (t)$, with $t = 8k/n+2$ and $k \in \N$. The approximated function was $f(p)  = t$. We created a Swiss Roll with 800 data points, and randomly sampled 200 points as the initial $Q$-set. We added noise with various  magnitudes (0.1, 0.2, 0.5, and 0.7) to the $P$-points as well as to the values of $f$ at the 
$P$-points. For example, Figure \ref{fig:swiss_roll_func_approx_noise2} left shows a case of approximation with uniformly distributed noise $U(-0.2, 0.2)$, while the right plot presents the denoised version. We see that the data were cleaned both in the domain and in the codomain of the function. In Figure \ref{fig:error_eval_chart_swiss_roll}, we plot the error values under various noise scenarios in the codomain, both for the noisy $Q^{(0)}$ data and the error of the RBF approximation on the $Q^{(300)}$ data. One can see that although the approximation error increases on the noisy data (from 0.03 to 0.22), the approximation error  on clean and quasi-uniformly distributed data error grows only moderately (from 0.02 to 0.18). We also see that at high levels of noise (e.g., 0.7) the accuracy is good. This shows the strengths of our approach, and justifies the need for data denoising as well as uniform sampling before approximation algorithms are applied.

%XXXX remove A-B

\vspace{-12mm}
\begin{figure}[H]
	\centering
	\captionsetup[subfloat]{farskip=0pt,captionskip=0pt, aboveskip=0pt}
	\subfloat[][]{ \includegraphics[width=0.5\textwidth]{./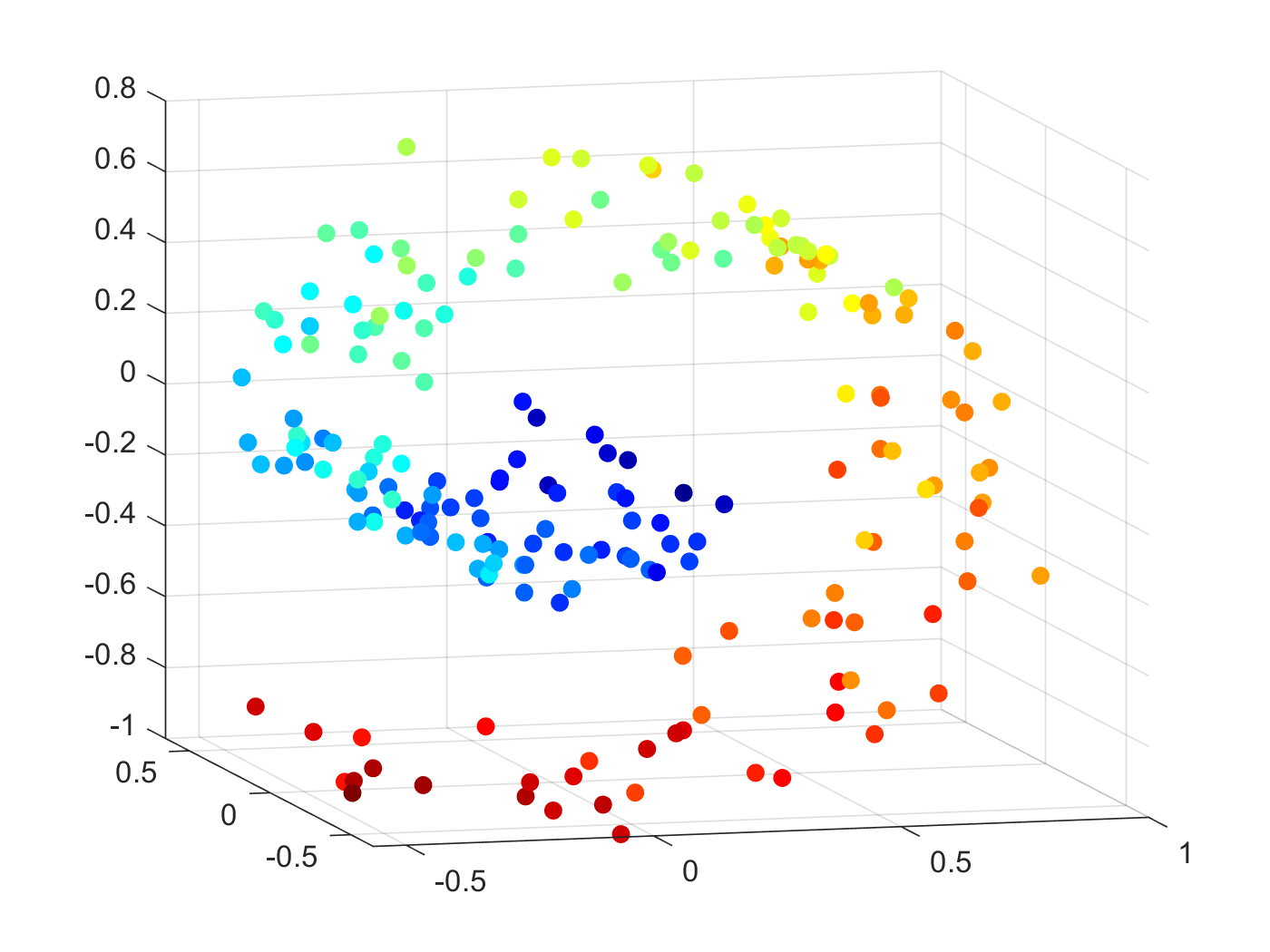} } \hspace{-2em}
	\subfloat[][]{ \includegraphics[width=0.5\textwidth]{./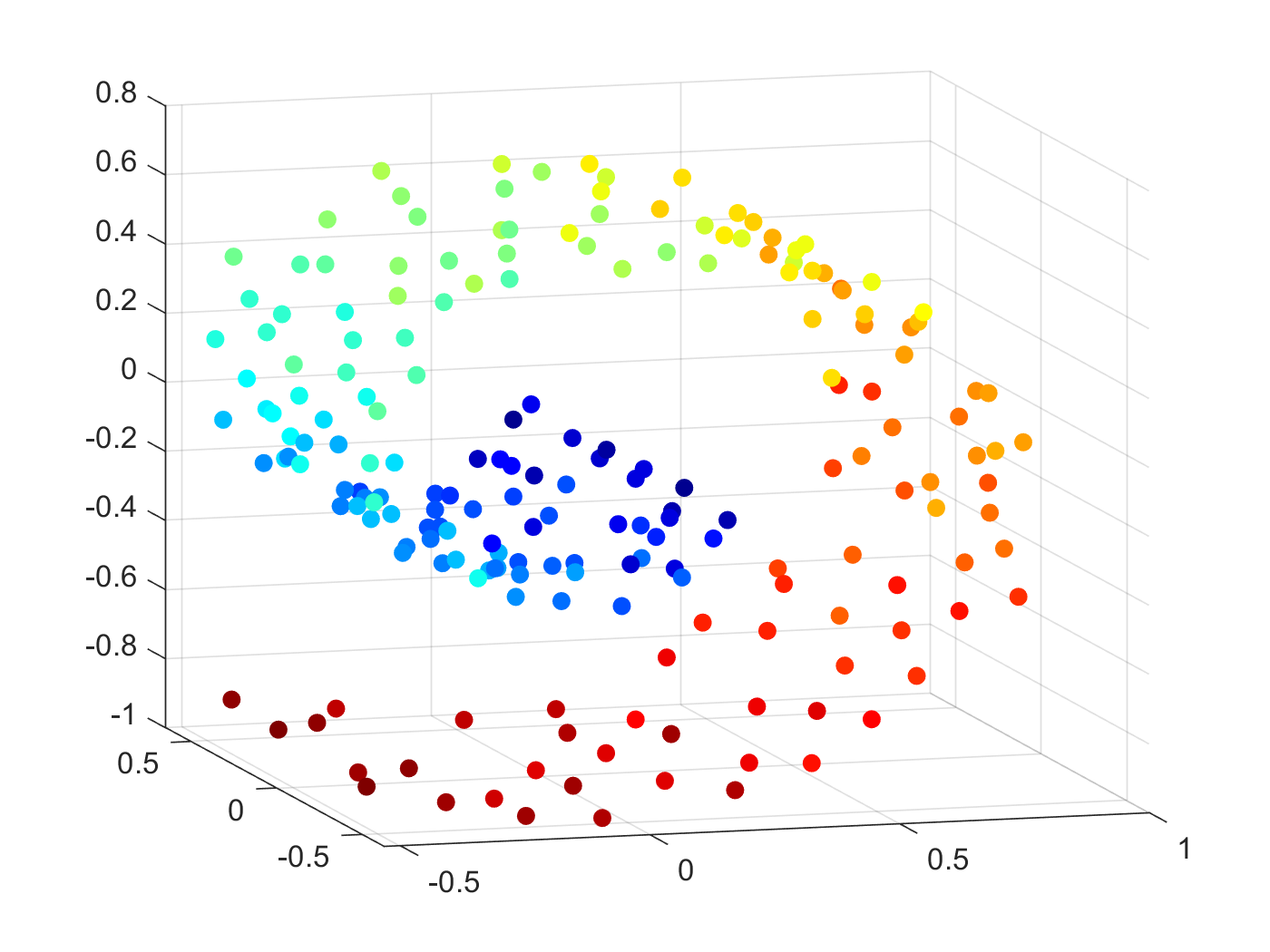} }
	
	\caption{Swiss Roll embedded in $\R^{60}$. The figure depicts the first tree coordinates. Left: The initial values of the function at the original $Q^{(0)}$ points with noise $U(-0.2, 0.2)$, with values indicated by the color. Right: MLOP function approximation at the data points $Q^{(300)}$ cleaned via the MLOP.}
	\label{fig:swiss_roll_func_approx_noise2}
\end{figure}

\begin{figure}[H]
	\centering
	\label{fig:a3}\includegraphics[width=\textwidth,height=6cm,keepaspectratio]{./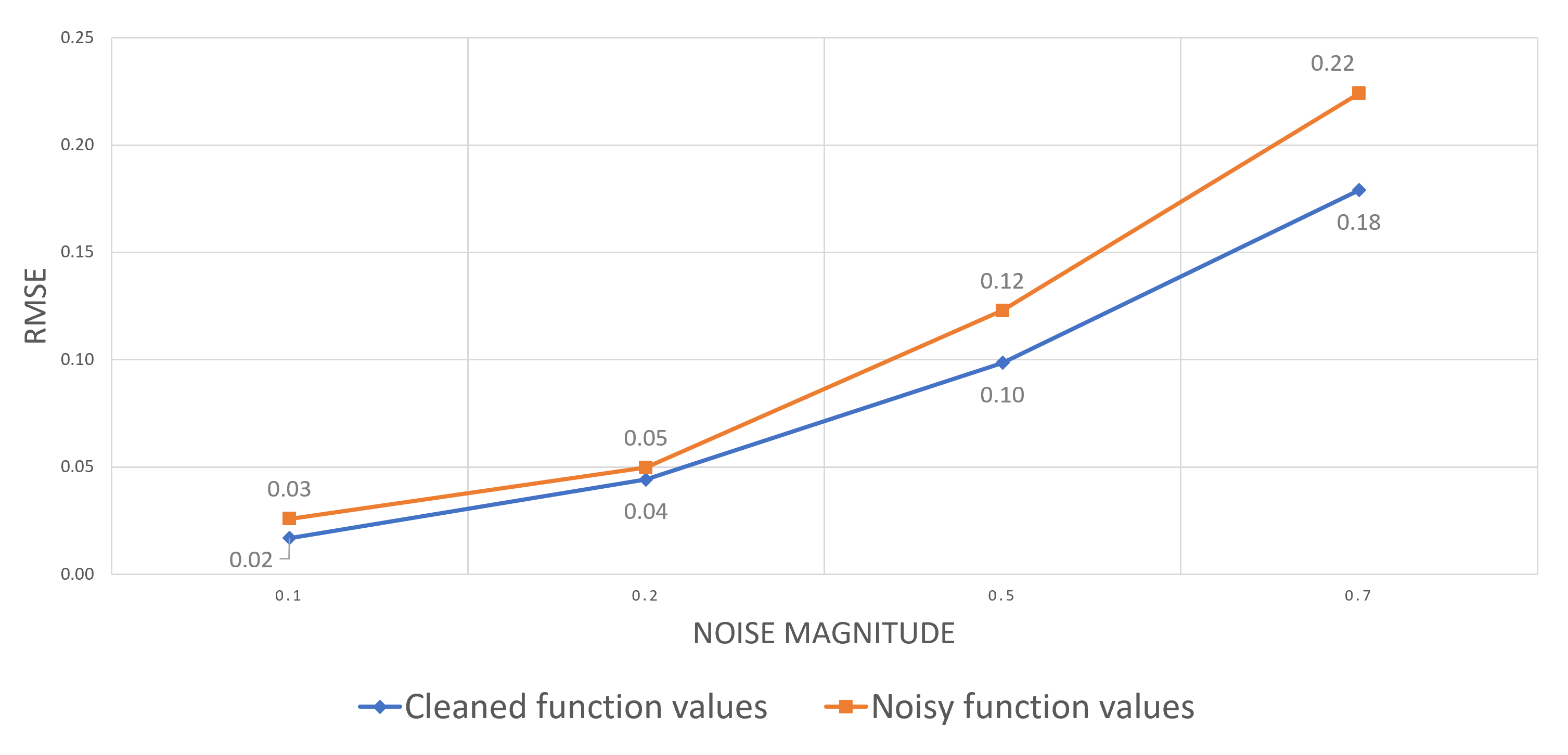}
	\caption{Effect of noise level on the accuracy of function approximation for a Swiss Roll embedded in  a 60-dimensional space. The RMSE error evaluated on the original noisy data is shown in orange, while the RMSE error on the cleaned data is presented in blue.}
	\label{fig:error_eval_chart_swiss_roll}
\end{figure}

\section{Discussion and Conclusions}
\label{sec:conclusions}

In this paper, we consider the problem of approximating a function on a manifold in high dimensions, with noise present both in the domain and in the codomain of the function. Given a set of points  and the values of an unknown function evaluated at these points, the goal is to find the approximation of the function at a new dataset. While in low dimensions this problem did receive a lot of attention in approximation theory, in high dimensions the solution is challenged by the curse of dimensionality. In our solution, we propose to combine the best of both worlds, the Manifold Locally Optimal Projection (MLOP) \cite{faigenbaumgolovin2020manifold} and the method of Radial Basis Functions (RBF) methods \cite{dyn1983iterative}. The approximation problem has two steps. First, find a noise-free representation of the manifold in terms of new points, and the noise-free values of the function at these points. Then, estimate the value of the function at a new given point $x$ using the RBF method,  with the centers set at the cleaned points. The MLOP method is used here for noise removal and for the generation of a quasi-uniform manifold sampling, as a pre-processing step for the RBF mehod; this improves the approximation results dramatically. In the paper, we showed that the order of approximation at the new data points is less than $C_1 h^2 +C_2 h_2^k$, where $h$ is the representative distance of the graph of the function $f$ of the initial data, and $h_2$ is the representative distance of the MLOP reconstruction.

A possible future direction would be to investigate a classification problem. Thus, given data that lies on a manifold, and the corresponding values of a non-smooth function which received only $k$ values, each representing a different class. In addition, both the data and the labeling contain noise and outliers. The research question to be addressed is whether new data can be classified with high accuracy.

An additional possible future direction would be to deal with optimization of functions  on a manifold. In the past decade, optimization gained a lot of attention, especially with the rise of Neural Networks computing (NN). Optimization algorithms are the pillar stones of the NNs, as they are in charge of constructing the networks, by learning from the training examples. In our research, we propose introducing a new optimization process, that will take into account the topology of the data. We would like to utilize the manifold structure of the data to improve the optimization process, by incorporating the manifold's  information into the NN optimization of a function. We propose extending the MLOP framework to deal with this task, by modifying the definition of the MLOP cost function to include the optimized function. By extending the definition of the MLOP algorithm, the gradient descent iterations will find not only the optimal manifold reconstruction, but also minimize the function.

\section*{Acknowledgments}
We would like to thank Dr. Barak Sober for valuable discussions  and comments. This study was supported by a generous donation from Mr. Jacques Chahine, made through the French Friends of Tel Aviv University, and was partially supported by ISF grant 2062/18.

\bibliographystyle{spmpsci}      % basic style, author-year citations
\bibliography{references_MLOP}
%\bibliographystyle{spmpsci}  spbasic     % mathematics and physical sciences
%\bibliographystyle{spphys}       % APS-like style for physics
%\bibliography{}   % name your BibTeX data base

% Non-BibTeX users please use
%\begin{thebibliography}{}
%
% and use \bibitem to create references. Consult the Instructions
% for authors for reference list style.
%
%\bibitem{RefJ}
% Format for Journal Reference
%Author, Article title, Journal, Volume, page numbers (year)
% Format for books
%\bibitem{RefB}
%Author, Book title, page numbers. Publisher, place (year)
% etc
%\end{thebibliography}

\end{document}